\documentclass[a4paper,11pt]{amsart}
\usepackage[T1]{fontenc}
\usepackage[english]{babel}
\usepackage[cp1252]{inputenc}
\usepackage{amsthm}
\usepackage{amsmath}
\usepackage{amsfonts}
\usepackage{amssymb}
\usepackage{hyperref}
\usepackage{color}
\usepackage{caption}
\usepackage{indentfirst}
\usepackage{amssymb}
\usepackage{eufrak}
\usepackage{mathrsfs}
\usepackage{xypic}
\usepackage[pdftex]{graphicx}
\usepackage{tikz}
\usepackage{booktabs}

\theoremstyle{plain}

\newcommand{\R}{\mathbb R}
\newcommand{\C}{\mathbb C}

\newcommand{\Z}{\mathbb Z}

\newcommand{\Log}{\mathrm{Log}\,}

\newcommand{\PP}{\mathbb P}

\newcommand{\di}{\displaystyle}

\newcommand{\Crit}{\operatorname{Crit}}

\newcommand{\Area}{\operatorname{Area}}

\newcommand{\Conv}{\operatorname{Conv}}
 
\newcommand{\MV}{\operatorname{MV}}

\newcommand{\CA}{\mathcal C\mathcal A}
\newcommand{\width}{\operatorname{width}}

\newcommand{\Newt}{\operatorname{Newt}}
\newcommand{\Trop}{\operatorname{Trop}}

\newcommand{\Ram}{\operatorname{Ram}}
\newcommand{\ord}{\operatorname{ord}}
\newcommand{\CP}{\mathbb CP}
\newcommand{\RP}{\mathbb RP}

\newcommand{\Int}{\operatorname{Int}}

\newcommand{\Con}{\operatorname{Con}}

\newcommand{\Rdeg}{\mathbb R\operatorname{deg}}

\newcommand{\Bez}{B_{\mathrm{Bez}}(C,m)}

 \newcommand{\Rea}{\operatorname{Re}}
\newcommand{\Ima}{\operatorname{Im}}

\newcommand{\ConB}{B_{\mathrm{con}}(C,m)}

\newcommand{\LSS}{B_{\rm LSS}}

\newcommand\restr[2]{{% we make the whole thing an ordinary symbol
  \left.\kern-\nulldelimiterspace % automatically resize the bar with \right
  #1 % the function
  \right|_{#2} % this is the delimiter
  }}

\newtheorem{remark}{Remark}[section]
\newtheorem*{mtheorem*}{Main Theorem}
\newtheorem{theorem}{Theorem}[section]
\newtheorem{definition}{Definition}[section]
\newtheorem{proposition}{Proposition}[section]
\newtheorem{corollary}{Corollary}[section]

\begin{document}
\title{Sparse Bounds for Amoeba Contours} 
\author{Mounir Nisse}
\date{}

\email{\href{mailto:mounir.nisse@gmail.com}{mounir.nisse@gmail.com}}

\thanks{Research of M. Nisse is supported in part by Xiamen University Malaysia Research Fund (Grant no. XMUMRF/ 2020-C5/IMAT/0013).}

%%%%%%%%%%%%%%%%%%%%%%%%%%%%%%%%%%%%%%%%%%%%%%%%%%%%%%%%%%%%%%%%%%%%%%%%%%%%%

\subjclass[2010]{14T20, 14M25, 14P25, 14Q20, 32A60, 52B20}
\keywords{Contour of an amoeba, logarithmic Gauss map, real contour degree, sparse elimination, mixed volume, Bernstein's theorem, Pfaffian manifold}

\maketitle

\begin{abstract}
We derive new upper bounds for the real degree of the contour of the amoeba of algebraic curves and hypersurfaces. Our approach refines the Pfaffian method of Lang--Shapiro--Shustin by replacing large total-degree estimates with logarithmic-conormal elimination and sparse mixed-volume techniques based on the transformed Newton polytopes. This yields universal bounds together with significantly sharper sparse estimates for broad classes of Laurent polynomials. Several comparisons and explicit examples illustrate the improvement of the new bounds over the previously known universal estimates.
\end{abstract}
 
%%%%%%%%%%%%%%%%%%%%%%%%%%%%%%%%%%%%%%%%%%%%%%%%%%%%%%%%%%%%%%%%%%%%%%%%%

\section{Introduction}

The geometry of amoebas lies at the intersection of algebraic geometry, complex analysis, convex geometry, and tropical geometry. Since the pioneering work of Gelfand, Kapranov and Zelevinsky \cite{GKZ94}, Bergman
\cite{Bergman71}, Bieri and Groves \cite{BieriGroves84}, and Forsberg,
Passare and Tsikh \cite{ForsbergPassareTsikh00}, the logarithmic image of an algebraic
variety has provided a remarkable link between algebraic geometry,
convex geometry and tropical geometry. Amoebas encode subtle information
on Newton polytopes, logarithmic limit sets, and tropicalizations, while
their complements exhibit rich topological and combinatorial structures.
Subsequent contributions by Mikhalkin \cite{Mikhalkin04}, Passare and
Rullg\aa rd \cite{PassareRullgard04}, Nisse and Sottile
\cite{NisseSottile13}, and the exposition of Maclagan and Sturmfels
\cite{MaclaganSturmfels15} have established amoeba theory as a
fundamental part of modern tropical geometry.

Among the geometric invariants associated with an amoeba, the contour
occupies a particularly important place. It is defined as the set of
critical values of the logarithmic map restricted to the smooth locus of
the hypersurface and coincides with the discriminant of the logarithmic
projection. The contour determines where the logarithmic projection
changes its local covering behaviour and therefore controls many
geometric properties of the amoeba. For smooth hypersurfaces, the
logarithmic critical locus is described by the logarithmic Gauss map,
whose real locus projects onto the contour
\cite{Mikhalkin04,PassareRullgard04}. Consequently, the geometry of the
contour is intimately related to both the logarithmic Gauss map and the
Newton polytope of the defining Laurent polynomial.

A natural quantitative invariant is the real degree of the contour,
namely the maximum number of intersection points between the contour and
a generic affine real line. Despite its simple definition, estimating
this invariant is a difficult problem because it involves the
interaction of logarithmic geometry, elimination theory and real
algebraic geometry. The first general answer was obtained by Lang,
Shapiro and Shustin \cite{LangShapiroShustin21}, who introduced an
elegant Pfaffian approach based on Khovanskii's theory of simple
Pfaffian manifolds \cite{Khovanskii91}. Their work provides explicit
universal bounds depending only on the degree of the defining polynomial
and independent of its coefficients. These estimates constitute the
current benchmark for universal contour-degree bounds.

Although universal, the Lang--Shapiro--Shustin estimates are necessarily
very large. Their construction relies on total-degree estimates and
therefore ignores the sparse structure of Laurent polynomials. In many
situations the Newton polytope contains far fewer lattice points than
the simplex determined by the total degree, suggesting that the true
complexity of the contour should be governed by sparse geometry rather
than by degree alone. Bernstein's theorem \cite{Bernstein75} and the
mixed-volume theory developed in \cite{GKZ94} provide precisely the
appropriate framework for exploiting this additional combinatorial
information.

The purpose of the present paper is to obtain substantially sharper
upper bounds for the real degree of the contour of amoebas of algebraic
curves and hypersurfaces. Our objective is not to replace the Pfaffian
philosophy of Lang, Shapiro and Shustin, but to refine the algebraic
elimination underlying their method. The guiding principle is that the
transformed logarithmic critical equations possess considerably smaller
Newton polytopes than those predicted by total-degree considerations. By
determining these transformed supports explicitly and combining them
with Bernstein's theorem, one obtains sparse estimates that remain
universal while reflecting the actual combinatorial complexity of the
defining Laurent polynomial.

The first main result establishes a new family of universal
contour-degree bounds obtained from logarithmic-conormal elimination
systems. Instead of estimating the transformed equations by their total
degrees, we analyze their exact Newton polytopes and derive
corresponding mixed-volume estimates. This produces universal bounds
that improve the classical Pfaffian estimates for large classes of
sparse hypersurfaces. In particular, the resulting bounds depend
naturally on the transformed supports rather than solely on the degree
of the original polynomial.

A second contribution is a systematic comparison between different
approaches to contour-degree estimates. We compare the universal
Pfaffian bounds of Lang--Shapiro--Shustin, Bézout-type estimates,
conormal elimination bounds and sparse mixed-volume estimates. Explicit
computations for families of plane curves and higher-dimensional
hypersurfaces illustrate that sparse elimination frequently yields a
significant reduction in the predicted contour degree. These examples
demonstrate that the Newton polytope itself carries much finer
information than the total degree regarding the complexity of the
contour.

Another feature of the paper is the explicit analysis of transformed
Newton polytopes arising from logarithmic critical equations. Their
geometry is described chart by chart and compared with the large
total-degree simplices used in previous work. This comparison explains
geometrically why sparse elimination improves universal estimates and
clarifies the relation between logarithmic criticality, Newton polytopes
and mixed volumes.

The results presented here complement the universal theory of Lang,
Shapiro and Shustin rather than replacing it. Their Pfaffian framework
remains the starting point of our analysis, whereas our contribution
consists in refining the algebraic input to that framework by
incorporating sparse elimination and Bernstein theory. In this way we
preserve the universality of the original method while producing
significantly smaller estimates whenever the defining Laurent polynomial
possesses substantial sparsity.

The paper is organized as follows. After recalling the necessary
background on amoebas, logarithmic Gauss maps, Newton polytopes and
Pfaffian manifolds, we derive transformed logarithmic critical systems
and study their Newton polytopes. We then establish new universal and
sparse contour-degree bounds, compare them with the estimates of Lang,
Shapiro and Shustin, and conclude with explicit examples showing the
effectiveness of the proposed method.

{\it Acknowledgements.}  The author would like to express his sincere gratitude to Boris Shapiro for kindly sending his paper with Lionel Lang and Eugeni Shustin \cite{LangShapiroShustin21}. Its results and perspective have been a valuable source of motivation for the present work.

%%%%%%%%%%%%%%%%%%%%%%%%%%%%%%%%%%%%%%%%%%%%%%%%%%%%%%%%%%%%%%%%%%%%%%%%%

\section{Preliminaries}

The theory of amoebas occupies a central position in modern algebraic and tropical geometry. Since the pioneering work of Gelfand, Kapranov and Zelevinsky \cite{GKZ94}, Bergman \cite{Bergman71}, Bieri and Groves \cite{BieriGroves84}, and Forsberg, Passare and Tsikh \cite{ForsbergPassareTsikh00}, the logarithmic image of an algebraic variety has proved to encode remarkable geometric, topological and combinatorial information. Amoebas provide a natural bridge between algebraic varieties and tropical geometry, while their asymptotic behaviour is governed by logarithmic limit sets and Newton polytopes. Further developments by Mikhalkin \cite{Mikhalkin04}, Passare and Rullg{\aa}rd \cite{PassareRullgard04}, Nisse and Sottile \cite{NisseSottile13}, and the monograph of Maclagan and Sturmfels \cite{MaclaganSturmfels15} established the logarithmic map as one of the fundamental tools in tropical geometry.

Throughout this paper we restrict our discussion to algebraic curves and hypersurfaces in the algebraic torus $
(\mathbb C^\ast)^n.
$ Let $
H=\{f=0\}\subset(\mathbb C^\ast)^n
$ be a reduced hypersurface defined by a Laurent polynomial $
f=\sum_{\alpha\in\mathbb Z^n}c_\alpha z^\alpha.
$ The logarithmic map is $
\Log(z_1,\ldots,z_n)=
(\log|z_1|,\ldots,\log|z_n|),
$ and the amoeba of $H$ is $
\mathcal A_H=\Log(H).
$ The contour of the amoeba is the set of critical values of $
\Log|_{H_{\rm sm}},
$ where $H_{\rm sm}$ denotes the smooth locus. Geometrically, the contour records the singular behaviour of the logarithmic projection and separates regions where the logarithmic map has different covering multiplicities.

For a smooth hypersurface the logarithmic Gauss map is
$
\gamma_{\log}(z)=
[z_1f_{z_1}:\cdots:z_nf_{z_n}]
\in\mathbb P^{n-1}.
$
It was shown by Mikhalkin and Passare--Rullg{\aa}rd that a point is logarithmically critical precisely when its logarithmic normal direction is real, namely when $
\gamma_{\log}(z)\in\mathbb RP^{\,n-1}
$ \cite{Mikhalkin04,PassareRullgard04}. Consequently the contour is obtained by projecting the real part of the logarithmic Gauss correspondence. This description reveals the intimate relation between the contour and the projective geometry of logarithmic normal directions.

The invariant studied in this paper is the real degree of the contour. If $
L
$ is an affine real line transverse to the regular part of the contour, one considers the finite set $
L\cap\mathcal C\mathcal A_H.
$ The real contour degree is defined by
$
\operatorname{Rdeg}(\mathcal C\mathcal A_H)=
\sup_L
\#(L\cap\mathcal C\mathcal A_H).
$
This quantity measures the maximal complexity of the contour with respect to affine line sections and may be regarded as a real analogue of the projective degree.

A major breakthrough was obtained by Lang, Shapiro and Shustin \cite{LangShapiroShustin21}. Using Khovanskii's theory of simple Pfaffian manifolds \cite{Khovanskii91}, they transformed the problem of counting intersections of a line with the contour into counting isolated solutions of a Pfaffian system. Their arguments produced explicit universal upper bounds depending only on the degree of the defining polynomial. These estimates are remarkable because they are completely coefficient-independent and apply to arbitrary hypersurfaces of a fixed degree.

Although universal, the Lang--Shapiro--Shustin estimates do not exploit the sparsity of Laurent polynomials. In many situations the Newton polytope is much smaller than the simplex determined by the total degree, suggesting that considerably sharper estimates should be possible. The natural tool for exploiting sparsity is Bernstein's theorem \cite{Bernstein75}, which computes the number of isolated solutions of a nondegenerate polynomial system from the mixed volumes of its Newton polytopes. Together with the convex-geometric techniques developed in \cite{GKZ94,MaclaganSturmfels15}, Bernstein's theorem provides a refined alternative to total-degree arguments.

For plane curves these ideas are particularly transparent. The Newton polygon governs numerous geometric invariants, including the genus of a generic curve, tropicalizations and logarithmic Gauss maps. Character equations arising from line tests possess Newton polygons that are lattice segments, and the associated mixed areas frequently give much smaller estimates than B\'ezout's theorem. This interaction between amoeba geometry and Newton polytopes motivates the sparse approach developed later in the paper.

The objective of the present work is not to develop a new theory of amoebas or to extend the hypersurface theory to complete intersections. Instead, our goal is to obtain substantially smaller upper bounds for the real degree of amoeba contours than the existing universal estimates of Lang, Shapiro and Shustin. We preserve the general philosophy of their Pfaffian approach while replacing coarse total-degree arguments by logarithmic-conormal elimination techniques and sparse Newton-polytope computations. The resulting estimates remain universal but are considerably sharper for broad classes of Laurent polynomials, especially when the defining equations have highly sparse supports.

%%%%%%%%%%%%%%%%%%%%%%%%%%%%%%%%%%%%%%%%%%%%%%%%%%%%%%%%%%%%%%%%%%%%%%%%%

\section{A logarithmic Gauss map estimate for the real degree of the contour}

Let $H\subset(\mathbb C^*)^n$ be a reduced hypersurface defined by a Laurent polynomial $f\in\mathbb C[z_1^{\pm1},\dots,z_n^{\pm1}]$, and let $H_{\rm sm}$ denote its smooth part. The logarithmic map is
$
\Log:(\mathbb C^*)^n\to\mathbb R^n,\qquad \Log(z)=(\log|z_1|,\dots,\log|z_n|).
$
The contour of the amoeba is
$
\mathcal C\mathcal A_H=\Log(\operatorname{Crit}(\Log|_{H_{\rm sm}})).
$
On $H_{\rm sm}$ the logarithmic Gauss map is
$
\gamma_{\log}:H_{\rm sm}\to\mathbb P^{n-1}
$ % 
 We write
$
\Gamma_{\mathbb R}=\gamma_{\log}^{-1}(\mathbb RP^{n-1})
$
and
$
\Gamma=\operatorname{Crit}(\Log|_{H_{\rm sm}}).
$
Then $\Gamma\subset\Gamma_{\mathbb R}$. In the nondegenerate part of the contour, this inclusion is an equality.

\begin{definition}[Admissible logarithmic normal directions]
Let $L\subset\mathbb R^n$ be an affine line transverse to the regular part of $\mathcal C\mathcal A_H$. A real projective direction $[\xi]\in\mathbb RP^{n-1}$ is called admissible for $L$ if there exists $z\in\Gamma$ such that $\Log(z)\in L$ and $\gamma_{\log}(z)=[\xi]$. We denote the set of admissible directions by
$
\mathcal N_L
$
and set
$
C_L=\#\mathcal N_L
$
whenever this number is finite.
\end{definition}

The real degree is defined by intersecting the contour with affine real lines: 
$$\mathbb R\deg(\mathcal C\mathcal A_H)=\sup_L\#(L\cap\mathcal C\mathcal A_H),$$ 
where $L\subset\mathbb R^n$ ranges over affine lines transverse to the contour and the intersections are counted without multiplicity or with the prescribed generic multiplicity convention.

Fix such a transverse affine line $L$. A point $x\in L\cap\mathcal C\mathcal A_H$ means that there exists a critical point $z\in H_{\rm sm}$ such that $\Log(z)=x$. Since $z$ is critical, its logarithmic Gauss image lies in the real projective space: $\gamma_{\log}(z)\in\mathbb RP^{n-1}$. Moreover, the fact that $x$ lies on the particular affine line $L$ imposes a condition on the possible real logarithmic normal directions. Geometrically, the contour is locally a discriminant hypersurface in the amoeba, and its normal direction is controlled by the logarithmic Gauss map. Therefore, for a fixed line $L$, only certain real projective directions $[\xi]\in\mathbb RP^{n-1}$ can occur as admissible logarithmic normal directions at points of $L\cap\mathcal C\mathcal A_H$. The number of such possible directions is denoted by $C_L$.

The finite degree $d_\gamma$ of the logarithmic Gauss map means that, for a generic admissible projective direction $[\xi]\in\mathbb RP^{n-1}$, the fiber $\gamma_{\log}^{-1}([\xi])$ contains at most $d_\gamma$ points, counted with topological multiplicity. In algebraic situations this degree is often computable from the Newton polytope, but the paragraph deliberately treats it analytically: it only assumes that the map is proper and finite over the relevant real directions. Thus one does not use a Bernstein mixed-volume computation here; one uses the finite covering behavior of $\gamma_{\log}$.

There is also a second multiplicity, denoted $m_{\Log}$. Even if a critical point $z$ is known, different critical points may have the same logarithmic image: $\Log(z)=\Log(z')$. The number $m_{\Log}$ is a uniform generic bound for the multiplicity of the projection $\Log:\operatorname{Crit}(\Log|_H)\to\mathcal C\mathcal A_H$. Thus $m_{\Log}$ measures how many critical points can project to the same contour point. If this projection is generically one-to-one, then $m_{\Log}=1$. If several critical points have the same logarithmic image, then $m_{\Log}>1$.

Now the estimate follows by counting possible lifts of the intersection $L\cap\mathcal C\mathcal A_H$. For each point of $L\cap\mathcal C\mathcal A_H$, there are at most $m_{\Log}$ critical points above it under $\Log$. Each such critical point has a real logarithmic Gauss direction. The line $L$ allows at most $C_L$ admissible real directions. For each admissible direction, the logarithmic Gauss map has at most $d_\gamma$ preimages. Hence the number of lifted critical points is bounded by $d_\gamma C_L$. After accounting for the possible multiplicity of the logarithmic projection, one obtains $\#(L\cap\mathcal C\mathcal A_H)\leq m_{\Log}d_\gamma C_L.$ If there is a uniform constant $C$ such that $C_L\leq C$ for every transverse affine line $L$, then taking the supremum over all $L$ gives $$\mathbb R\deg(\mathcal C\mathcal A_H)\leq m_{\Log}d_\gamma C.$$

The important point is that this is an analytic estimate. It does not count solutions of a polynomial system by Newton polytopes. A Bernstein estimate would introduce equations defining the critical locus and then use mixed volumes of their Newton polytopes. Here the estimate instead uses the map $\gamma_{\log}:H_{\rm sm}\to\mathbb P^{n-1}$ as a finite analytic map. The real degree is controlled by three quantities: the finite topological degree $d_\gamma$ of the logarithmic Gauss map, the logarithmic projection multiplicity $m_{\Log}$, and the number $C_L$ of real logarithmic normal directions compatible with the test line $L$.

%%%%%%%%%%%%%%%%%%%%%%%%%%%%%%%%%%%%%%%%%%%%%%%%%%%%%%%%%%%%%%%%%%%%%%%%%%%

This proves the following proposition:

Let $H\subset(\mathbb C^*)^n$ be a reduced hypersurface and let $f$ be a defining equation satisfying the following hypotheses.
\begin{enumerate}
\item[(i)] The logarithmic Gauss map $\gamma_{\log}:H_{\rm sm}\to\mathbb P^{n-1}$ is proper and finite over a real Zariski open subset $U_{\mathbb R}\subset\mathbb RP^{n-1}$ containing all admissible directions which occur for the transverse affine lines under consideration. Its topological degree over $U_{\mathbb R}$ is $d_\gamma$, meaning that for every regular value $[\xi]\in U_{\mathbb R}$ the fiber $\gamma_{\log}^{-1}([\xi])$ consists of at most $d_\gamma$ points, counted with local topological multiplicity.

\item[(ii)] The critical locus $\Gamma=\operatorname{Crit}(\Log|_{H_{\rm sm}})$ maps generically finitely to its image $\mathcal C\mathcal A_H$ under $\Log$, and there is an integer $m_{\Log}\geq1$ such that for a generic point $x$ of the regular part of $\mathcal C\mathcal A_H$ one has
$
\#(\Gamma\cap\Log^{-1}(x))\leq m_{\Log}.
$

\item[(iii)] There exists a constant $C$ such that for every affine line $L\subset\mathbb R^n$ transverse to the regular part of $\mathcal C\mathcal A_H$, the set $\mathcal N_L$ of admissible logarithmic normal directions is finite and satisfies
$
C_L=\#\mathcal N_L\leq C.
$
\end{enumerate}

\begin{proposition}
Let $H\subset(\mathbb C^*)^n$ be a reduced hypersurface and let $f$ be a defining equation. Assume that the following hypotheses hold
Then  every such transverse affine line satisfies
$
\#(L\cap\mathcal C\mathcal A_H)\leq m_{\Log}d_\gamma C_L.
$
In particular,
$
\mathbb R\deg(\mathcal C\mathcal A_H)\leq m_{\Log}d_\gamma C.
$
\end{proposition}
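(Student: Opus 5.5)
The plan is a double-counting argument through the critical locus. Fix a transverse affine line $L$. Consider the set $\Gamma_L=\Gamma\cap\Log^{-1}(L)$ of critical points lying over $L$, and the two maps
$$
L\cap\mathcal C\mathcal A_H\xleftarrow{\ \Log\ }\Gamma_L\xrightarrow{\ \gamma_{\log}\ }\mathcal N_L .
$$
By the definition of the contour, $\Log:\Gamma_L\to L\cap\mathcal C\mathcal A_H$ is surjective. By the definition of admissible directions and the Mikhalkin--Passare--Rullg{\aa}rd criterion ($\Gamma\subset\Gamma_{\mathbb R}$), $\gamma_{\log}$ maps $\Gamma_L$ into $\mathcal N_L\subset\mathbb RP^{n-1}$. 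Hence
$$
\#(L\cap\mathcal C\mathcal A_H)\le\#\Gamma_L\le\sum_{[\xi]\in\mathcal N_L}\#\bigl(\Gamma_L\cap\gamma_{\log}^{-1}([\xi])\bigr).
$$

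First step: bound each fiber. By (iii), $\mathcal N_L$ is finite. By (i), every $[\xi]\in\mathcal N_L$ lies in $U_{\mathbb R}$, where $\gamma_{\log}$ is proper and finite. So $\gamma_{\log}^{-1}([\xi])$ is a finite set of at most $d_\gamma$ points, counted with local topological multiplicity, and each multiplicity is $\ge1$. This gives $\#\Gamma_L\le d_\gamma C_L$, and therefore $\#(L\cap\mathcal C\mathcal A_H)\le d_\gamma C_L$.

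Second step: reconcile this with the factor $m_{\Log}$ in the statement. Since $m_{\Log}\ge1$, the bound $d_\gamma C_L\le m_{\Log}d_\gamma C_L$ is immediate. Hypothesis (ii) is only needed if one counts contour points with multiplicity, i.e.\ weighted by the number of critical preimages. Under that convention the weighted count equals $\#\Gamma_L$, and no loss occurs either. I would present the unweighted version and remark that the statement as written follows a fortiori. Taking the supremum over $L$ and using $C_L\le C$ from (iii) gives the bound on $\mathbb R\deg(\mathcal C\mathcal A_H)$.

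The main obstacle is the fiber bound, because (i) asserts the $d_\gamma$ bound only for regular values $[\xi]\in U_{\mathbb R}$, while an admissible direction might be a critical value of $\gamma_{\log}$. I would handle this with properness and finiteness. Over $U_{\mathbb R}$ the map is a finite proper map, so local topological degrees are positive and their sum over a fiber is locally constant. A nonregular $[\xi]$ can then be approximated by regular values, and the count of distinct preimages at $[\xi]$ is at most the multiplicity-weighted count at nearby regular values, which is at most $d_\gamma$. If that step is doubtful, the fallback is to read (i) as applying to all admissible directions, as the preceding discussion implicitly does.
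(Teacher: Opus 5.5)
Your proof is correct and follows the paper's own argument: contour points on $L$ lift surjectively to critical points over $L$, these map under $\gamma_{\log}$ into the at most $C_L$ admissible directions, and each such direction has at most $d_\gamma$ preimages. You are also right that this already gives $\#(L\cap\mathcal C\mathcal A_H)\le d_\gamma C_L$. The paper's own count of lifted critical points stops at $d_\gamma C_L$ and then multiplies by $m_{\Log}\ge 1$ without need, so the stated bound follows a fortiori. Your treatment of admissible directions that are not regular values is more careful than the paper's, which simply assumes the $d_\gamma$ fiber bound for every admissible direction.
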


%%%%%%%%%%%%%%%%%%%%%%%%%%%%%%%%%%%%%%%%%%%%%%%%%%%%%%%%%%%%%%%%%%%%%%%%%%%%%
%%%%%%%%%%%%%%%%%%%%%%%%%%%%%%%%%%%%%%%%%%%%%%%%%%%%%%%%%%%%%%%%%%%%%%%%%%%%%

\section{Algebraic Incidence Degree}

Let $C\subset(\C^\ast)^2$ be a smooth algebraic curve defined by a Laurent polynomial\\ $f(z,w)=\sum_{\alpha\in A}c_\alpha z^{\alpha_1}w^{\alpha_2}$, and let $\Delta=\Newt(f)=\Conv(A)\subset\R^2$. The contour of the amoeba is $\CA_C=\Log(\Crit(\Log|_C))$. Let $L=L_{\gamma,m}$ be a real affine line in the logarithmic plane, where $m=(a,b)\in\Z^2$ is primitive and $L_{\gamma,m}=\{(x,y)\in\R^2\mid ax+by=\gamma\}$. The purpose is to replace a vague constant $C$ by a precise algebraic incidence degree attached to $L$ and to compute this degree from $\Delta$ by Bernstein's theorem.
 A smooth point $(z,w)\in C$ belongs to $\Crit(\Log|_C)$ if and only if $\gamma_{\log}(z,w)\in\RP^1$, equivalently $zf_z$ and $wf_w$ are real proportional. Algebraically, this real-proportionality condition is complexified by introducing a parameter $\lambda\in\C^\ast$ and imposing $zf_z-\lambda wf_w=0$. The condition $\Log(z,w)\in L_{\gamma,m}$ is $a\log|z|+b\log|w|=\gamma$, and its algebraic complexification is the binomial character equation $z^a w^b=\eta$, where $\eta\in\C^\ast$ is a generic parameter corresponding to the translate of the affine line.
Define the complex Gauss-direction incidence variety associated with $L$ by
$$
\mathfrak I_L(f)=\{(z,w,\lambda)\in(\C^\ast)^2\times\C^\ast\mid f(z,w)=0,\ zf_z-\lambda wf_w=0,\ z^a w^b-\eta=0\}.
$$
The algebraic incidence degree is
$
\operatorname{deg}_{\textrm{inc}}(L,f)=\#\mathfrak I_L(f),
$
where the number of points is counted with multiplicities for generic $\eta$. Every transverse real point of $L\cap\CA_C$ is represented by a point of this complex incidence variety with $\lambda\in\R^\ast$ and with the appropriate real modulus condition, and therefore
$$
\#(L\cap\CA_C)\leq \operatorname{deg}_{{\textrm{inc}},{\R}}(L,f)\leq \operatorname{deg}_{\textrm{inc}}(L,f).
$$
If one uses the two-sided real contour convention in which each tropical edge intersection may give two real contour branches, the constant appearing in the refined real estimate is
$$
C_L(f)=2\,\operatorname{deg}_{\textrm{inc}}(L,f).
$$

We now compute $\operatorname{deg}_{\textrm{inc}}(L,f)$ by Bernstein's theorem. The unknowns are $(z,w,\lambda)\in(\C^\ast)^3$. The Newton polytope of $f$ is $\Delta_1=\Delta\times\{0\}\subset\R^3$. The polynomial $zf_z-\lambda wf_w$ has one part supported in $\Delta\times\{0\}$ and the other part supported in $\Delta\times\{1\}$, and hence its Newton polytope is contained in $\Delta_2=\Delta\times[0,1]$; for generic coefficients with all relevant monomials present, this containment is equality. The binomial $z^a w^b-\eta$ has Newton polytope $\Delta_3=\Conv\{(0,0,0),(a,b,0)\}$. Bernstein's theorem gives
$$
\operatorname{deg}_{\textrm{inc}}(L,f)\leq 3!\,\MV_3\left(\Delta\times\{0\},\Delta\times[0,1],\Conv\{(0,0,0),(a,b,0)\}\right).
$$
If the three equations are nondegenerate with respect to their Newton polytopes, then equality holds:
$$
\operatorname{deg}_{\textrm{inc}}(L,f)=3!\,\MV_3\left(\Delta\times\{0\},\Delta\times[0,1],\Conv\{(0,0,0),(a,b,0)\}\right).
$$

This mixed volume has a simple geometric interpretation. Since $\Delta\times\{0\}$ and $\Conv\{(0,0,0),(a,b,0)\}$ lie in the horizontal plane, while $\Delta\times[0,1]$ has vertical height one, one obtains
$$
3!\,\MV_3\left(\Delta\times\{0\},\Delta\times[0,1],\Conv\{(0,0,0),(a,b,0)\}\right)=2!\,\MV_2(\Delta,S_m),
$$
where $S_m=\Conv\{(0,0),(a,b)\}$. The mixed area of a polygon with a primitive segment is the lattice width of the polygon in the direction perpendicular to that segment. 
Let's state the following definition first:

\begin{definition}[Lattice width]
Let
$
\Delta\subset\mathbb R^n
$
be a nonempty convex lattice polytope, and let
$
m\in\mathbb Z^n\setminus\{0\}
$
be a nonzero lattice vector. The \emph{lattice width of $\Delta$ in the direction $m$} is
$$
\operatorname{width}_m(\Delta)
=
\max_{u\in\Delta}\langle m,u\rangle
-
\min_{u\in\Delta}\langle m,u\rangle,
$$
where
$
\langle m,u\rangle=m_1u_1+\cdots+m_nu_n
$
is the standard pairing between $\mathbb Z^n$ and $\mathbb R^n$. Since the function $u\mapsto\langle m,u\rangle$ is linear, the maximum and the minimum are attained at vertices of $\Delta$. Hence, if $\Delta$ is a lattice polytope, then equivalently
$$
\operatorname{width}_m(\Delta)
=
\max_{v\in\operatorname{Vert}(\Delta)}\langle m,v\rangle
-
\min_{v\in\operatorname{Vert}(\Delta)}\langle m,v\rangle.
$$
If $m$ is primitive, namely
$
\gcd(m_1,\ldots,m_n)=1,
$
then $\operatorname{width}_m(\Delta)$ is the lattice distance between the two supporting hyperplanes
$
\{u\in\mathbb R^n:\langle m,u\rangle=\min_\Delta\langle m,\cdot\rangle\}
$
and
$
\{u\in\mathbb R^n:\langle m,u\rangle=\max_\Delta\langle m,\cdot\rangle\}.
$
For a non-primitive vector $m=q m_0$, where $q\in\mathbb Z_{>0}$ and $m_0$ is primitive, one has
$$
\operatorname{width}_m(\Delta)=q\,\operatorname{width}_{m_0}(\Delta).
$$
Thus the primitive direction gives the intrinsic lattice width, while a non-primitive vector records the same direction with multiplicity.
\end{definition}

%%%%%%%%%%%%%%%%%%%%%%%%%%%%%%%%%%%%%%%%%%%%%%%%%%%%%%%%%%%%%%%%%%%%%%%%%%%%%

\section{Correction terms}

Before the refined directional estimate is stated, we define in this section the correction terms 
$
E_m(\tau,s,\gamma_{\log}).
$ 
We use the following conventions. Let
$
C=\{f=0\}\subset(\C^\ast)^2
$
be a smooth real algebraic curve with Newton polygon
$
\Delta=\Newt(f),
$
let
$
\Gamma=\Trop(C)
$
be its tropical spine, let
$
\tau
$
be the subdivision of $\Delta$ dual to $\Gamma$, and let
$
s
$
denote the Viro sign distribution on the lattice points of the subdivision. Let
$
m\in\Z^2
$
be the primitive normal direction of the tested affine logarithmic line.

\medskip

The \emph{vertex correction in direction $m$} is the nonnegative integer
$
E_m^{\mathrm{vert}}(\tau,s)
$
which measures the excess contribution to the real contour degree coming from neighborhoods of vertices of the tropical spine
$
\Gamma=\Trop(C)
$
beyond the contribution predicted by stable intersections with the edges of $\Gamma$.
More precisely, for a vertex
$
v\in\Gamma^{(0)},
$
let
$
\sigma_v\in\tau
$
be the dual two-dimensional cell of the Newton subdivision. The local patchworking chart associated with
$
\sigma_v
$
determines the real branches of the curve near the tropical vertex $v$. The stable edge contribution counts only the branches that pass through the vertex neighborhood from one adjacent edge of $\Gamma$ to another in the standard transverse way. The vertex correction
$
E_m(v;\tau,s)
$
is the number of additional intersections with the tested direction $m$ produced by local features of this chart which are not accounted for by the edge contribution. These features include local ovals contained in the vertex chart, returning branches which enter and leave the same side of the chart, and non-unimodular local cells whose real patchworking has more branches than the primitive binomial model.

The total vertex correction is
$$
E_m^{\mathrm{vert}}(\tau,s)
=
\sum_{v\in\Gamma^{(0)}}E_m(v;\tau,s).
$$
Equivalently, if the local correction at $v$ is written in terms of the primitive outgoing edge directions
$
u_1,\ldots,u_q\in\Z^2,
$
their lattice weights
$
\ell_1,\ldots,\ell_q,
$
and the Viro sign distribution $s$ on the dual cell $\sigma_v$, then
$
E_m(v;\tau,s)
$
is the difference between the actual number of real contour branches crossing  
an $m$-test line sufficiently close to $v$, restricted to a sufficiently small neighborhood of $v$, %sufficiently small $m$-test line 
in the local chart and the number predicted by the weighted stable edge contribution
$\di
\sum_{i=1}^{q}\ell_i|\det(m,u_i)|.
$
Thus
$$
E_m(v;\tau,s)
=
\Big(
N_m^{\mathrm{loc}}(v;s)
-
\sum_{i=1}^{q}\ell_i|\det(m,u_i)|
\Big)_+,
$$
where
$
N_m^{\mathrm{loc}}(v;s)
$
denotes the actual local real contour intersection number in the Viro chart and
$
(x)_+=\max\{x,0\}.
$

\medskip

Let
$
\gamma_{\log}:C\longrightarrow\mathbb P^1
$
be the logarithmic Gauss map.
 The \emph{logarithmic Gauss ramification divisor} is the ramification divisor
$
\Ram(\gamma_{\log})
$
of this map. A point
$
p\in C
$
belongs to
$
\Ram(\gamma_{\log})
$
if the differential
$
d\gamma_{\log}(p)
$
vanishes, or equivalently if the logarithmic Gauss map fails to be locally a covering map at $p$.\\

For a primitive tested direction
$
m\in\Z^2,
$
the \emph{directional logarithmic Gauss ramification correction} is the integer
$
R_m(\gamma_{\log})
$
defined as the total ramification multiplicity of those points of
$
\Ram(\gamma_{\log})
$
whose logarithmic Gauss direction contributes to the real incidence associated with the affine logarithmic lines normal to $m$. Equivalently,
$$
R_m(\gamma_{\log})
=
\sum_{p\in\Ram(\gamma_{\log})\cap \gamma_{\log}^{-1}(\mathbb RP^1_m)}
\operatorname{ram}_p(\gamma_{\log}),
$$
where
$
\operatorname{ram}_p(\gamma_{\log})
$
is the local ramification index minus one, and
$
\mathbb RP^1_m
$
denotes the real projective Gauss directions relevant to the tested normal direction $m$.

In estimates, the term
$
R_m(\gamma_{\log})
$
accounts for the failure of the real logarithmic Gauss incidence to be locally transverse. If the tested real Gauss direction avoids the branch locus of
$
\gamma_{\log},
$
then
$
R_m(\gamma_{\log})=0.
$
In all cases one has the uniform bound
$$
R_m(\gamma_{\log})
\leq
\deg \Ram(\gamma_{\log}).
$$
When $C$ is nondegenerate with Newton polygon $\Delta$, the Riemann--Hurwitz theorem gives
$$
\deg \Ram(\gamma_{\log})
=
2\deg(\gamma_{\log})+2g(C)-2.
$$
Using
$
\deg(\gamma_{\log})=2\operatorname{Area}(\Delta)
$
and
$
g(C)=\#(\operatorname{Int}\Delta\cap\Z^2),
$
one obtains
$$
\deg \Ram(\gamma_{\log})
=
4\operatorname{Area}(\Delta)
+
2\#(\operatorname{Int}\Delta\cap\Z^2)
-
2.
$$

\medskip

Let
$
\tau
$
be a regular lattice subdivision of the Newton polygon
$
\Delta,
$
and let
$
s:\Delta\cap\Z^2\longrightarrow\{\pm1\}
$
be a Viro sign distribution. The patchworking
$
(\tau,s)
$
is called \emph{primitive} if every two-dimensional cell of
$
\tau
$
is a primitive lattice triangle, meaning that each triangle has normalized area
$
1
$
or, equivalently, Euclidean area
$
\frac12.
$
Thus every local polynomial chart associated with a two-dimensional cell is equivalent, after a monomial change of coordinates and multiplication by a monomial, to a primitive trinomial model.

The patchworking
$
(\tau,s)
$
is called \emph{maximal} if the real curve produced by Viro patchworking has the maximal number of connected components allowed by Harnack's inequality for the Newton polygon
$
\Delta.
$
Equivalently, for a nonsingular real curve with Newton polygon
$
\Delta,
$
maximality means that the real part has
$
g(C)+1
$
connected components, where
$
g(C)=\#(\operatorname{Int}\Delta\cap\Z^2)
$
is the genus of a nondegenerate curve with Newton polygon $\Delta$.

\medskip

Thus \emph{the patchworking is primitive maximal} means that
$
\tau
$
is a unimodular triangulation of
$
\Delta
$
and the sign distribution
$
s
$
produces a maximal real curve. In this situation the local Viro charts are standard primitive charts. Consequently there are no non-unimodular local cells, no extra local ovals contained inside a vertex chart, and no returning branches beyond those predicted by the stable edge model. Therefore, in the primitive maximal situation,
$
E_m^{\mathrm{vert}}(\tau,s)=0
$
for every tested direction $m$, provided the tested line is generic with respect to the tropical spine.

\medskip

\begin{definition}[Total correction term]
The total correction term appearing in the refined directional contour estimate is
$$
E_m(\tau,s,\gamma_{\log})
=
E_m^{\mathrm{vert}}(\tau,s)
+
R_m(\gamma_{\log}).
$$
Here
$
E_m^{\mathrm{vert}}(\tau,s)
$
records local tropical and patchworking corrections near vertices of the spine, while
$
R_m(\gamma_{\log})
$
records the correction caused by logarithmic Gauss ramification in the tested direction. Thus
$
E_m(\tau,s,\gamma_{\log})=0
$
whenever the patchworking is primitive maximal and the logarithmic Gauss map is unramified in the tested real direction.
\end{definition}

%%%%%%%%%%%%%%%%%%%%%%%%%%%%%%%%%%%%%%%%%%%%%%%%%%%%%%%%%%%%%%%%%%%%%%%%%%%%%
\medskip

With the convention that the testing line has normal vector $m=(a,b)$, this mixed-area expression is the lattice width in the normal direction $m$:
$
2!\,\MV_2(\Delta,S_m)=\width_{m^\perp}(\Delta).
$
Consequently,
$
\operatorname{deg}_{\textrm{inc}}(L,f)\leq \width_{m^\perp}(\Delta),
$
and in the nondegenerate case equality holds:
$
\operatorname{deg}_{\textrm{inc}}(L,f)=\width_{m^\perp}(\Delta).
$
Thus the precise replacement for the constant $C$ is
$
C_L(f)=2\,\operatorname{deg}_{\textrm{inc}}(L,f),
$
and Bernstein's theorem gives
$
C_L(f)\leq 2\,\width_{m^\perp}(\Delta).
$
Therefore the refined directional estimate becomes
$$
\mathbb R\deg_m(\CA_C)\leq C_L(f)+E_m(\tau,s,\gamma_{\log})\leq 2\,\width_{m^\perp}(\Delta)+E_m(\tau,s,\gamma_{\log}),
$$
where
$$
E_m(\tau,s,\gamma_{\log})
=
E_m^{\mathrm{vert}}(\tau,s)+R_m(\gamma_{\log})
$$
records vertex corrections and logarithmic Gauss ramification. If the patchworking is primitive maximal and the logarithmic Gauss ramification correction vanishes in the tested direction, then
$$
\mathbb R\deg_m(\CA_C)\leq 2\,\width_{m^\perp}(\Delta).
$$

For a generic line one has $\Delta=\Delta_2=\Conv\{(0,0),(1,0),(0,1)\}$. In the direction $m=(1,1)$, $\width_{(1,-1)}(\Delta_2)=2$, hence $C_L(f)\leq4$. The contour of a generic line satisfies $\mathbb R\deg(\CA_C)=4$, so the incidence-degree replacement passes the generic-line test.

For $\Delta\subset d\Delta_2$ and $m=(1,1)$, one has $\width_{(1,-1)}(d\Delta_2)=2d$, hence $C_L(f)\leq4d$. This is the refined directional incidence bound. It is much sharper than the universal fewnomial bound, but it is conditional on tropical spine-control and the absence of additional ramification corrections in the tested direction.

%%%%%%%%%%%%%%%%%%%%%%%%%%%%%%%%%%%%%%%%%%%%%%%%%%%%%%%%%%%%%%%%%%%%%%%%%%%%%%

\bigskip

\section{Eliminating $\lambda$ and Computing the Newton Polygons}
Let $C=\{f(z,w)=0\}\subset(\C^\ast)^2$ be a smooth algebraic curve with Newton polygon $\Delta=\Newt(f)\subset\R^2$. Let $m=(a,b)\in\Z^2$ be primitive, and let $L_{\gamma,m}=\{(x,y)\in\R^2\mid ax+by=\gamma\}$ be an affine line in the logarithmic plane. The algebraic character associated with this affine direction is $\chi_m(z,w)=z^a w^b$. After complexifying the condition $\Log(z,w)\in L_{\gamma,m}$, one obtains the binomial equation $z^a w^b=\eta$, where $\eta\in\C^\ast$ is a generic complex parameter.

The incidence system with the auxiliary Gauss-direction parameter $\lambda$ is
$$
f(z,w)=0,\qquad zf_z-\lambda wf_w=0,\qquad z^a w^b-\eta=0.
$$
If $\lambda$ is eliminated, then the equation $zf_z-\lambda wf_w=0$ imposes no further algebraic condition on $(z,w)$ away from the exceptional locus where $wf_w=0$ and $zf_z\neq0$, because one can solve uniquely $\lambda=\frac{zf_z}{wf_w}$. Thus, after saturation by the exceptional factors, the projection to the $(z,w)$-torus is controled by the two-equation system
$
f(z,w)=0,\, z^a w^b-\eta=0.
$
This system counts the algebraic intersection of $C$ with the torus character hypersurface $\chi_m=\eta$. It does not by itself impose the real condition $\dfrac{zf_z}{wf_w}\in\mathbb R$, which is the condition defining the real logarithmic Gauss direction. The real Gauss condition is an additional real selection inside the algebraic incidence count.

The Newton polygon of the first equation is $\Delta$. The Newton polygon of the binomial equation is the segment
$
S_m=\Conv\{(0,0),(a,b)\}.
$
If $a$ or $b$ is negative, one may multiply the binomial by a Laurent monomial to place its exponents in the nonnegative quadrant. This only translates $S_m$ and does not change the mixed volume. Therefore the Newton segment is always $S_m$, up to translation.
By Bernstein's theorem, the number of isolated solutions of
$$
f(z,w)=0,\qquad z^a w^b-\eta=0
$$
in $(\C^\ast)^2$, counted with multiplicities and for generic coefficients and generic $\eta$, is
$$
N_m(\Delta)=2!\,\MV_2(\Delta,S_m).
$$
Since $S_m$ is a primitive segment, this mixed volume is the lattice width of $\Delta$ in the primitive direction perpendicular to $m$. Write
$
m^\perp=(-b,a).
$
Then
$$
N_m(\Delta)=2!\,\MV_2(\Delta,S_m)=\width_{m^\perp}(\Delta).
$$
The perpendicular direction appears because the exponent segment of the binomial is parallel to $m$, while the affine logarithmic line $ax+by=\gamma$ is perpendicular to $m$.

%%%%%%%%%%%%%%%%%%%%%%%%%%%%%%%%%%%%%%%%%%%%%%%%%%%%%%%%%%%%%%%%%%%%%%%%%%%%%%

One writes a binomial character equation
$
z^a w^b=\eta.
$
The vector
$
m=(a,b)
$
is now the exponent vector of the binomial, not the normal vector used in the width $\width_m(\Delta)$ for the logarithmic affine line. The tropicalization of the binomial equation $z^a w^b=\eta$ is
$
ax+by=\log|\eta|.
$
This affine line has normal vector $(a,b)$ and direction perpendicular to $(a,b)$. When Bernstein's theorem computes the number of solutions of
$
f(z,w)=0,\, z^a w^b=\eta,
$
it uses the Newton segment
$
S_m=\Conv\{(0,0),(a,b)\}.
$
The mixed area
$
2!\,\MV_2(\Delta,S_m)
$
is the width of $\Delta$ in the direction perpendicular to $S_m$. If
$
m=(a,b),
$
then a primitive perpendicular vector is
$
m^\perp=(-b,a).
$
Hence
$
2!\,\MV_2(\Delta,S_m)=\width_{m^\perp}(\Delta).
$

Now take
$
m=(1,-1)
$
as a binomial exponent vector. Then
$
z^a w^b=z w^{-1}=\frac{z}{w}.
$
Thus the binomial equation is
$
\frac{z}{w}=\eta,
$
or equivalently
$
z=\eta w.
$
The Newton segment has direction $(1,-1)$, and the perpendicular vector is
$
m^\perp=(1,1).
$
Therefore the Bernstein intersection number is
$
N_{(1,-1)}(d\Delta_2)=\width_{(1,1)}(d\Delta_2).
$
Since
$
\langle (1,1),(0,0)\rangle=0,\,
\langle (1,1),(d,0)\rangle=d,\,
\langle (1,1),(0,d)\rangle=d,
$
we get
$
\width_{(1,1)}(d\Delta_2)=d-0=d.
$
Thus
$
N_{(1,-1)}(d\Delta_2)=d.
$
For $d=1$, this says that a generic line meets the binomial curve $z/w=\eta$ in one point. %  

 %%%%%%%%%%%%%%%%%%%%%%%%%%%%%%%%%%%%%%%%%%%%%%%%%%%%%%%%%%%%%%%%%%%%%%%%%%%% 

\subsection*{The Triangle $\Delta=d\Delta_2$}
Let
$
\Delta=d\Delta_2=\Conv\{(0,0),(d,0),(0,d)\}.
$
For any vector $n=(u,v)$,
$$
\width_n(d\Delta_2)=d\bigl(\max\{0,u,v\}-\min\{0,u,v\}\bigr).
$$
Taking $n=m^\perp=(-b,a)$ gives
$
N_m(d\Delta_2)=d\bigl(\max\{0,-b,a\}-\min\{0,-b,a\}\bigr).
$
Equivalently,
$$
2!\,\MV_2(d\Delta_2,S_m)=d\bigl(\max\{0,-b,a\}-\min\{0,-b,a\}\bigr).
$$
For example, if $m=(1,-1)$, then $m^\perp=(1,1)$, and
$N_{(1,-1)}(d\Delta_2)=\width_{(1,1)}(d\Delta_2)=d.$
This agrees with the direct algebraic fact that a generic degree-$d$ curve meets a generic character equation $z/w=\eta$ in $d$ points in $(\C^\ast)^2$.

\subsection*{Rectangles}
Let
$
\Delta=[0,A]\times[0,B].
$
For any vector $n=(u,v)$,
$
\width_n(\Delta)=A|u|+B|v|.
$
Taking $n=m^\perp=(-b,a)$ gives
$
N_m(\Delta)=\width_{m^\perp}(\Delta)=A|b|+B|a|.
$
Therefore
$$
2!\,\MV_2([0,A]\times[0,B],S_m)=A|b|+B|a|.
$$
If $m=(1,0)$, then $m^\perp=(0,1)$ and $N_{(1,0)}(\Delta)=B$. This says that a bidegree $(A,B)$ curve meets a generic fiber $z=\eta$ in $B$ points. If $m=(0,1)$, then $m^\perp=(-1,0)$ and $N_{(0,1)}(\Delta)=A$. If $m=(1,-1)$, then $m^\perp=(1,1)$ and $N_{(1,-1)}(\Delta)=A+B$.

\subsection*{Hirzebruch Trapezoids}
Let
$
\Delta_{A,B,r}=\Conv\{(0,0),(A,0),(A+rB,B),(0,B)\}.
$
For a vector $n=(u,v)$, the width is obtained by evaluating $uX+vY$ at the four vertices:
$$
\width_n(\Delta_{A,B,r})=\max\{0,Au,(A+rB)u+Bv,Bv\}-\min\{0,Au,(A+rB)u+Bv,Bv\}.
$$
Taking $n=m^\perp=(-b,a)$ gives
$$
N_m(\Delta_{A,B,r})
=
\max\{0,-Ab,-(A+rB)b+Ba,Ba\}
-
\min\{0,-Ab,-(A+rB)b+Ba,Ba\}.
$$
Therefore
$$
2!\,\MV_2(\Delta_{A,B,r},S_m)
=
\max\{0,-Ab,-(A+rB)b+Ba,Ba\}
-
\min\{0,-Ab,-(A+rB)b+Ba,Ba\}.
$$
For the principal character directions, this gives
$$
N_{(1,0)}(\Delta_{A,B,r})=\width_{(0,1)}(\Delta_{A,B,r})=B,
$$
$$
N_{(0,1)}(\Delta_{A,B,r})=\width_{(-1,0)}(\Delta_{A,B,r})=A+rB,
$$
and
$$
N_{(1,-1)}(\Delta_{A,B,r})=\width_{(1,1)}(\Delta_{A,B,r})=A+(r+1)B.
$$

\subsection*{Resulting Incidence Constant}
After eliminating $\lambda$, the algebraic incidence degree attached to the character direction $m$ is
$$
\deg_{\rm inc}(m,\Delta)=N_m(\Delta)=2!\,\MV_2(\Delta,S_m)=\width_{m^\perp}(\Delta).
$$
If the real contour estimate uses the two-sided contour convention, the associated real incidence constant is
$$
C_m(\Delta)=2\,\deg_{\rm inc}(m,\Delta)=2\,\width_{m^\perp}(\Delta).
$$
Thus, in the absence of additional vertex and logarithmic Gauss ramification corrections, the directional estimate takes the form
$$
\mathbb R\deg_m(\CA_C)\leq 2\,\width_{m^\perp}(\Delta).
$$
With corrections included, the computable form is
$
\mathbb R\deg_m(\CA_C)\leq 2\,\width_{m^\perp}(\Delta)+E_m(\tau,s,\gamma_{\log}).
$

For $\Delta=d\Delta_2$ this becomes
$
C_m(d\Delta_2)=2d\bigl(\max\{0,-b,a\}-\min\{0,-b,a\}\bigr).
$

For $\Delta=[0,A]\times[0,B]$ this becomes
$
C_m(\Delta)=2(A|b|+B|a|).
$

For $\Delta_{A,B,r}$ this becomes
$$
C_m(\Delta_{A,B,r})=
2\left(\max\{0,-Ab,-(A+rB)b+Ba,Ba\}-\min\{0,-Ab,-(A+rB)b+Ba,Ba\}\right).
$$

%%%%%%%%%%%%%%%%%%%%%%%%%%%%%%%%%%%%%%%%%%%%%%%%%%%%%%%%%%%%%%%%%%%%%%%%%%%%
%%%%%%%%%%%%%%%%%%%%%%%%%%%%%%%%%%%%%%%%%%%%%%%%%%%%%%%%%%%%%%%%%%%%%%%%%%%%

\section{Incidence Degree with Separate Notation for Line Normals and Binomial Exponents}

Let $C=\{f(z,w)=0\}\subset(\C^\ast)^2$ be a smooth algebraic curve with Newton polygon $\Delta=\Newt(f)\subset\R^2$. We use two distinct symbols. The vector
$
n=(n_1,n_2)\in\Z^2
$
denotes the primitive normal vector of an affine line in the logarithmic plane, while
$
m=(a,b)\in\Z^2
$
denotes the exponent vector of a binomial character. These two vectors play different roles and must not be confused.

The affine logarithmic line with normal vector $n$ is
$$
L_{\gamma,n}=\{(x,y)\in\R^2\mid n_1x+n_2y=\gamma\}.
$$
The directional real degree in this direction is
$$
\mathbb R\deg_n(\CA_C)=\sup_{\gamma\in\R}\#(L_{\gamma,n}\cap\CA_C),
$$
where only transverse intersections are counted.
The binomial character with exponent vector $m=(a,b)$ is
$
\chi_m(z,w)=z^a w^b.
$
The corresponding algebraic character equation is
$$
z^a w^b=\eta,
\qquad \eta\in\C^\ast.
$$
Its Newton polygon is the lattice segment
$
S_m=\Conv\{(0,0),m\}=\Conv\{(0,0),(a,b)\}.
$
Multiplying the binomial by a Laurent monomial only translates this segment and therefore does not change its mixed volume with $\Delta$.
The real logarithmic Gauss condition is that $zf_z$ and $wf_w$ are real proportional. Equivalently, there exists $\lambda\in\R^\ast$ such that
$
zf_z=\lambda wf_w.
$
For the affine-line normal $n$, the real incidence correspondence is
$$
\mathfrak I^{\R}_{\gamma,n}(f)
=
\left\{
(z,w,\lambda)\in C\times\R^\ast\ \middle|\ zf_z=\lambda wf_w,\ n_1\log|z|+n_2\log|w|=\gamma
\right\}.
$$
This correspondence keeps the real Gauss condition and the logarithmic-line condition simultaneously. Its projection by $\Log$ maps onto $L_{\gamma,n}\cap\CA_C$, up to the usual finite ambiguity coming from distinct points of $C$ with the same logarithmic image.

To obtain an algebraic upper bound, one replaces the real logarithmic equation by a complex binomial character equation. This is where the exponent vector $m$ enters. The complex incidence correspondence associated with the binomial exponent $m$ is
$$
\mathfrak I_{\eta,m}(f)
=
\left\{
(z,w,\lambda)\in(\C^\ast)^2\times\C^\ast\ \middle|\ f(z,w)=0,\ zf_z-\lambda wf_w=0,\ z^a w^b-\eta=0
\right\}.
$$
Before eliminating $\lambda$, this system still contains the logarithmic Gauss equation
$
zf_z-\lambda wf_w=0.
$
After eliminating $\lambda$, away from the exceptional locus where $wf_w=0$ and $zf_z\neq0$, one obtains the two-equation system
$$
f(z,w)=0,\qquad z^a w^b-\eta=0.
$$
This eliminated system computes the algebraic intersection of $C$ with the torus character hypersurface $\chi_m=\eta$. It does not by itself impose the real condition $\lambda\in\R^\ast$. Therefore it gives a complex algebraic upper bound for the real incidence count, while the real Gauss condition and the possible ramification of $\gamma_{\log}$ determine which of these complex points contribute to the real contour.

By Bernstein's theorem, for generic $\eta$ and nondegenerate supports, the number of isolated solutions of
$
f(z,w)=0,\, z^a w^b-\eta=0
$
in $(\C^\ast)^2$ is
$$
\operatorname{deg}_{\textrm{inc}}(m,\Delta)=2!\MV_2(\Delta,S_m).
$$
Since $S_m$ is the segment in the exponent direction $m$, this mixed volume is the lattice width of $\Delta$ in the primitive direction perpendicular to $m$. If
$
m^\perp=(-b,a),
$
then
$$
2!\MV_2(\Delta,\Conv\{0,m\})=\width_{m^\perp}(\Delta).
$$
Thus the incidence degree associated with the binomial exponent $m$ is
$$
\operatorname{deg}_{\textrm{inc}}(m,\Delta)=\width_{m^\perp}(\Delta).
$$

Now we relate this to the contour direction. The contour direction is specified by the normal vector $n$ of the affine logarithmic line $L_{\gamma,n}$. To recover the width $\width_n(\Delta)$ from the Bernstein computation, the binomial exponent vector must be chosen perpendicular to $n$. Thus one sets
$$
m=n^\perp=(-n_2,n_1).
$$
Then
$$
m^\perp=(-m_2,m_1)=(-n_1,-n_2)=-n,
$$
and since lattice width is unchanged by changing sign,
$$
\width_{m^\perp}(\Delta)=\width_{-n}(\Delta)=\width_n(\Delta).
$$
Therefore
$$
2!\MV_2(\Delta,\Conv\{0,n^\perp\})=\width_n(\Delta).
$$
This is the precise relation between the affine-line normal direction and the binomial exponent direction.
The algebraic incidence constant for the contour direction $n$ is therefore
$$
C_n(\Delta)=2\,\operatorname{deg}_{\textrm{inc}}(n^\perp,\Delta).
$$
Using Bernstein's theorem,
$
C_n(\Delta)=2\cdot 2!\MV_2(\Delta,\Conv\{0,n^\perp\})=2\,\width_n(\Delta).
$
The factor $2$ reflects the two-sided real contour contribution near a transverse tropical edge in the spine-controlled regime.
Hence the refined directional contour estimate should be written as
$$
\mathbb R\deg_n(\CA_C)\leq 2\,\width_{n^\perp}(\Delta)+E_n(\tau,s,\gamma_{\log}),
$$
where $E_n(\tau,s,\gamma_{\log})$ contains the vertex correction and the logarithmic Gauss ramification correction. If the patchworking is primitive maximal and the tested direction avoids logarithmic Gauss ramification, then $E_n(\tau,s,\gamma_{\log})=0$ and
$$
\mathbb R\deg_n(\CA_C)\leq 2\,\width_{n^\perp}(\Delta).
$$

Let us verify the generic-line example. For a generic line,
$
\Delta=\Delta_2=\Conv\{(0,0),(1,0),(0,1)\}.
$
Take the contour direction
$
n=(1,-1).
$
Then
$$
\width_n(\Delta_2)=\width_{(1,-1)}(\Delta_2)=2.
$$
To compute this width by Bernstein's theorem, we must choose the binomial exponent
$
m=n^\perp=(1,1).
$
The binomial is
$
zw=\eta.
$
Bernstein's theorem gives
$$
2!\MV_2(\Delta_2,\Conv\{(0,0),(1,1)\})=\width_{(1,-1)}(\Delta_2)=2.
$$
Therefore
$
C_n(\Delta_2)=2\cdot2=4.
$
This agrees with the exact value
$
\mathbb R\deg(\CA_C)=4
$
for a generic line.

If one takes $m=(1,-1)$ as the binomial exponent, then the Bernstein mixed volume gives
$$
2!\MV_2(\Delta_2,\Conv\{(0,0),(1,-1)\})=\width_{(1,1)}(\Delta_2)=1.
$$
For $d\Delta_2$, it gives
$
2!\MV_2(d\Delta_2,\Conv\{(0,0),(1,-1)\})=\width_{(1,1)}(d\Delta_2)=d.
$
This is correct for the binomial equation $z/w=\eta$, but it is not the same as the contour-width direction $n=(1,-1)$. To obtain the contour width in the direction $n=(1,-1)$, one must use the perpendicular binomial exponent $m=(1,1)$.
For the triangle $d\Delta_2$, the general formula is
$$
\width_n(d\Delta_2)=d\bigl(\max\{0,n_1,n_2\}-\min\{0,n_1,n_2\}\bigr).
$$
The incidence constant in the contour direction $n$ is
$
C_n(d\Delta_2)=2d\bigl(\max\{0,n_1,n_2\}-\min\{0,n_1,n_2\}\bigr).
$
For $n=(1,-1)$, this gives
$
C_{(1,-1)}(d\Delta_2)=4d.
$

\medskip

For a rectangle
$
\Delta=[0,A]\times[0,B],
$
one has
$
\width_n(\Delta)=A|n_1|+B|n_2|.
$
Thus
$
C_n(\Delta)=2(A|n_1|+B|n_2|).
$
This is obtained by choosing the binomial exponent $m=n^\perp=(-n_2,n_1)$ and applying
$$
2!\MV_2(\Delta,\Conv\{0,m\})=\width_n(\Delta).
$$

For a Hirzebruch trapezoid
$
\Delta_{A,B,r}=\Conv\{(0,0),(A,0),(A+rB,B),(0,B)\},
$
one has
$$
\width_n(\Delta_{A,B,r})
=
\max\{0,An_1,(A+rB)n_1+Bn_2,Bn_2\}
-
\min\{0,An_1,(A+rB)n_1+Bn_2,Bn_2\}.
$$
Consequently
$$
C_n(\Delta_{A,B,r})
=
2\left(
\max\{0,An_1,(A+rB)n_1+Bn_2,Bn_2\}
-
\min\{0,An_1,(A+rB)n_1+Bn_2,Bn_2\}
\right).
$$

So, from now the symbol $n$ is always reserved for the normal direction of the affine logarithmic line and for the directional real degree $\mathbb R\deg_n(\CA_C)$. The symbol $m$ is always reserved for the exponent vector of the binomial character $z^m=\eta$. Bernstein's formula is always
$$
2!\MV_2(\Delta,\Conv\{0,m\})=\width_{m^\perp}(\Delta).
$$
To recover the contour direction $n$, one sets $m=n^\perp$, and then
$$
2!\MV_2(\Delta,\Conv\{0,n^\perp\})=\width_n(\Delta).
$$

%%%%%%%%%%%%%%%%%%%%%%%%%%%%%%%%%%%%%%%%%%%%%%%%%%%%%%%%%%%%%%%%%%%%%%%%%%%%

{ This yields the following  incidence-degree theorem.}

%%%%%%%%%%%%%%%%%%%%%%%%%%%%%%%%%%%%%%%%%%%%%%%%%%%%%%%%%%%%%%%%%%%%%%%%%%%%%
%%%%%%%%%%%%%%%%%%%%%%%%%%%%%%%%%%%%%%%%%%%%%%%%%%%%%%%%%%%%%%%%%%%%%%%%%%%%%
%%%%%%%%%%%%%%%%%%%%%%%%%%%%%%%%%%%%%%%%%%%%%%%%%%%%%%%%%%%%%%%%%%%%%%%%%%%%%

\begin{theorem}[Normal-direction incidence degree]\label{thm:normal-incidence-degree}
Let
$
C=\{f=0\}\subset(\C^\ast)^2
$
be a smooth algebraic curve with Newton polygon
$
\Delta=\operatorname{Newt}(f)\subset\R^2.
$
Let
$
n=(n_1,n_2)\in\Z^2
$
be a primitive normal direction, put
$
m=n^\perp=(-n_2,n_1),
$
and let
$
L_{\gamma,n}
=
\{x\in\R^2\mid\langle n,x\rangle=\gamma\}.
$
Assume that, for generic
$
\gamma\in\R
$
and
$
\theta\in\R/2\pi\Z,
$
the character-incidence system
$
f(z,w)=0,
$\, 
$
z^{n_1}w^{n_2}=e^{\gamma+i\theta}
$
is zero-dimensional and Bernstein nondegenerate. Then
$$
\deg_{\mathrm{inc}}(n,\Delta)
=
2!\MV_2
\left(
\Delta,\Conv\{0,n\}
\right)
=
\width_{n^\perp}(\Delta).
$$
Since
$
m=n^\perp,
$
this may be written as
$
\deg_{\mathrm{inc}}(n,\Delta)
=
\width_m(\Delta).
$
If the two-sided complex incidence constant is defined by
$
C_n^{\mathrm{two}}(\Delta)
=
2\deg_{\mathrm{inc}}(n,\Delta),
$
then
$$
C_n^{\mathrm{two}}(\Delta)
=
2\width_m(\Delta).
$$
\end{theorem}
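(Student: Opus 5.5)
The plan is to identify $\deg_{\mathrm{inc}}(n,\Delta)$ with the number of solutions in $(\C^\ast)^2$ of the eliminated two-equation system
$$f(z,w)=0,\qquad z^{n_1}w^{n_2}-e^{\gamma+i\theta}=0,$$
and then evaluate this number by Bernstein's theorem. First, as in the section on eliminating $\lambda$, the parameter $\lambda$ is determined uniquely by $\lambda=zf_z/(wf_w)$ away from the exceptional locus. By smoothness of $C$, $zf_z$ and $wf_w$ do not vanish simultaneously on $C$. For generic $\eta=e^{\gamma+i\theta}$, the finitely many points of $C$ with $wf_w=0$ do not lie on $\{z^{n_1}w^{n_2}=\eta\}$. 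So the projection $(z,w,\lambda)\mapsto(z,w)$ is a bijection, preserving multiplicities, from the incidence variety onto the solution set of the eliminated system, and $\deg_{\mathrm{inc}}(n,\Delta)$ equals its number of solutions.

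Second, the binomial $z^{n_1}w^{n_2}-\eta$ has Newton polytope $\Conv\{0,n\}$, up to translation by a monomial multiplication, which does not change mixed volumes. By the zero-dimensionality and Bernstein nondegeneracy hypotheses, Bernstein's theorem gives equality:
$$\#\{f=0,\ z^{n_1}w^{n_2}=\eta\}=2!\MV_2(\Delta,\Conv\{0,n\}).$$

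Third, we compute the mixed area of a polygon with a segment. Using $2!\MV_2(P,Q)=\Area(P+Q)-\Area(P)-\Area(Q)$ with $Q=\Conv\{0,n\}$ of zero area, $P+Q$ is $P$ swept along $n$. The added area is $|n|$ times the Euclidean length of the projection of $\Delta$ onto the line orthogonal to $n$, which is exactly $\max_\Delta\langle n^\perp,\cdot\rangle-\min_\Delta\langle n^\perp,\cdot\rangle=\width_{n^\perp}(\Delta)$.

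An independent algebraic check is available. Since $n$ is primitive, complete it to a unimodular matrix with first row $n$, and change coordinates $u=z^{n_1}w^{n_2}$, $v=z^{p}w^{q}$. The binomial becomes $u=\eta$, and $f$ restricted to $u=\eta$ is a Laurent polynomial in $v$ whose exponent span is the width of the transformed $\Delta$ in the second coordinate direction. This span equals $\width_{n^\perp}(\Delta)$, up to sign, since the width is invariant under sign change. Nondegeneracy guarantees the extreme coefficients are nonzero for generic $\eta$, so the root count is exactly that span.

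Finally, $m=n^\perp$ gives $\deg_{\mathrm{inc}}(n,\Delta)=\width_m(\Delta)$, and the two-sided constant follows by multiplying by $2$.

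The main obstacle is the first step: justifying rigorously that eliminating $\lambda$ loses and creates no solutions and preserves multiplicities. This requires using genericity of $\eta$ (equivalently of $(\gamma,\theta)$) to avoid the finite exceptional set $\{wf_w=0\}\cap C$, and checking that the $\lambda$-equation is linear in $\lambda$ with nonvanishing coefficient there.
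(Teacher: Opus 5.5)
Your proposal is correct. Its skeleton matches the paper's proof: apply Bernstein to the pair $\Delta$, $\Conv\{0,n\}$, then identify the mixed area with a lattice width. The sub-arguments differ in three places.

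\textbf{Eliminating $\lambda$.} The paper does not eliminate $\lambda$ at all. Its proof says the solution count of $f=0$, $z^{n_1}w^{n_2}=\eta$ is ``by definition'' $\deg_{\mathrm{inc}}(n,\Delta)$, following the convention of the preceding two sections. So the step you flag as the main obstacle is not needed for this theorem. If you do route through the $\lambda$-incidence, the argument has a gap. The set $C\cap\{wf_w=0\}$ is finite only if $C$ has no component $\{z=c\}$. If $\lambda$ is required to lie in $\C^\ast$, as in the paper's earlier incidence varieties, you must also avoid $\{zf_z=0\}$, which is infinite when $C$ contains some $\{w=c\}$. On such components the affine incidence genuinely loses points, and genericity of $\eta$ does not help. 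The clean fix is the homogeneous incidence with $[\lambda_0:\lambda_1]\in\CP^1$. There the projection to $C$ is an isomorphism because $(zf_z,wf_w)\neq(0,0)$ on a smooth $C$ in the torus.

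\textbf{The mixed-area identity.} The paper chooses $A\in\mathrm{GL}(2,\Z)$ with $An=e_1$, slices $A\Delta+tS_0$ horizontally, and compares Cavalieri with the polarization formula. It then pulls the vertical covector back to $\pm n^\perp$. Your Euclidean formula does the same computation directly in the original coordinates: $\Area(P+[0,n])=\Area(P)+|n|\,\ell$, where $\ell$ is the length of the projection of $\Delta$ orthogonal to $n$, and $|n|\,\ell=\width_{n^\perp}(\Delta)$ since $|n^\perp|=|n|$. This avoids tracking how $A$ acts on covectors and does not even use primitivity.

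\textbf{Your algebraic check.} The monomial change $u=z^{n_1}w^{n_2}$ is genuinely stronger than the paper's route. The extreme $v$-coefficients of $f$ are nonzero Laurent polynomials in $u$, because distinct exponents with the same $v$-degree have distinct $u$-degrees. Hence genericity of $\eta$ alone makes them nonzero. The count $\#\{f=0,\ z^{n_1}w^{n_2}=\eta\}=\width_{n^\perp}(\Delta)$ then holds for all but finitely many $\eta$, with multiplicities, without invoking Bernstein. In effect this shows the theorem's nondegeneracy hypothesis is automatic.
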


\begin{proof}
The vector
$
n=(n_1,n_2)
$
is the primitive normal vector of the affine logarithmic line
$
L_{\gamma,n}
=
\{x\in\R^2\mid\langle n,x\rangle=\gamma\}.
$
For
$
(z,w)\in(\C^\ast)^2,
$
one has
$$
\langle n,\Log(z,w)\rangle
=
n_1\log|z|+n_2\log|w|.
$$
Using the multiplicativity of the absolute value,
$$
n_1\log|z|+n_2\log|w|
=
\log\left(|z|^{n_1}|w|^{n_2}\right)
=
\log\left|z^{n_1}w^{n_2}\right|.
$$
Therefore
$$
\Log^{-1}(L_{\gamma,n})
=
\left\{
(z,w)\in(\C^\ast)^2
\;\middle|\;
\left|z^{n_1}w^{n_2}\right|=e^\gamma
\right\}.
$$
Thus the real hypersurface $\Log^{-1}(L_{\gamma,n})$ is the union, over all phases $\theta$, of the complex algebraic character slices
$
z^{n_1}w^{n_2}=e^{\gamma+i\theta}.
$
Fix generic $\gamma$ and $\theta$, and write
$
\eta=e^{\gamma+i\theta}\in\C^\ast.
$
The corresponding incidence system is
$
f(z,w)=0,
$
$
z^{n_1}w^{n_2}-\eta=0.
$
The Newton polygon of the first equation is $\Delta$. The support of the second Laurent polynomial consists of the two exponent vectors
$
0
\, \text{and}\, 
n.
$
Hence its Newton polygon is
$
S_n=\Conv\{0,n\}.
$
If one or both coordinates of $n$ are negative, this causes no difficulty because the equation is a Laurent equation. Multiplication by a Laurent monomial translates the Newton segment, and mixed volume is invariant under translations of its arguments.

By hypothesis, the incidence system is zero-dimensional and Bernstein nondegenerate. The Bernstein--Kushnirenko theorem therefore gives the exact number of solutions in $(\C^\ast)^2$, counted with algebraic multiplicity:
$$
\#\left\{
(z,w)\in(\C^\ast)^2
\;\middle|\;
f(z,w)=0,\ 
z^{n_1}w^{n_2}=\eta
\right\}
=
2!\MV_2(\Delta,S_n).
$$
The left-hand side is, by definition, the generic complex incidence degree
$
\deg_{\mathrm{inc}}(n,\Delta).
$
Consequently,
$$
\deg_{\mathrm{inc}}(n,\Delta)
=
2!\MV_2
\left(
\Delta,\Conv\{0,n\}
\right).
$$

It remains to identify the mixed area with the appropriate lattice width. Recall that for a nonzero integral vector $u$, the lattice width of $\Delta$ in direction $u$ is
$$
\width_u(\Delta)
=
\max_{p\in\Delta}\langle u,p\rangle
-
\min_{p\in\Delta}\langle u,p\rangle.
$$
We shall prove that
$
2!\MV_2
\left(
\Delta,\Conv\{0,n\}
\right)
=
\width_{n^\perp}(\Delta).
$

Since $n$ is primitive, it can be completed to a lattice basis. Thus there exists
$
A\in\operatorname{GL}(2,\Z)
$
such that
$
An=e_1=(1,0).
$
A unimodular linear transformation preserves normalized area and mixed area. Therefore
$$
2!\MV_2
\left(
\Delta,\Conv\{0,n\}
\right)
=
2!\MV_2
\left(
A\Delta,\Conv\{0,e_1\}
\right).
$$
Put
$
P=A\Delta
$
and
$
S_0=\Conv\{0,e_1\}.
$
For
$
t\ge0,
$
consider the Minkowski sum
$
P+tS_0.
$
At each height $y$, the horizontal slice of the convex polygon $P$ is either empty or a compact interval. If
$
P_y=[\alpha(y),\beta(y)],
$
then
$$
(P+tS_0)_y
=
[\alpha(y),\beta(y)+t].
$$
Hence every nonempty horizontal slice increases in length by exactly $t$.

Let
$
y_{\min}
=
\min_{(x,y)\in P}y
$
and
$
y_{\max}
=
\max_{(x,y)\in P}y.
$
By Cavalieri's principle,
$$
\operatorname{Area}(P+tS_0)
=
\operatorname{Area}(P)
+
t(y_{\max}-y_{\min}).
$$
The difference
$
y_{\max}-y_{\min}
$
is the lattice width of $P$ in the vertical direction:
$
y_{\max}-y_{\min}
=
\width_{e_2}(P).
$

On the other hand, the polarization formula for mixed area gives
$$
\operatorname{Area}(P+tS_0)
=
\operatorname{Area}(P)
+
2t\MV_2(P,S_0)
+
t^2\operatorname{Area}(S_0).
$$
Since $S_0$ is one-dimensional,
$
\operatorname{Area}(S_0)=0.
$
Thus
$$
\operatorname{Area}(P+tS_0)
=
\operatorname{Area}(P)
+
2t\MV_2(P,S_0).
$$
Comparing the coefficients of $t$ yields
$
2\MV_2(P,S_0)
=
\width_{e_2}(P).
$
Since
$
2!=2,
$
we have
$$
2!\MV_2(P,S_0)
=
\width_{e_2}(P).
$$

We now return to the original coordinates. Under the unimodular map $A$, the covector defining the vertical coordinate pulls back to a primitive covector annihilating $n$. Such a covector is, up to sign,
$
n^\perp=(-n_2,n_1).
$
Since lattice width is unchanged when the direction is multiplied by $-1$, one obtains
$
\width_{e_2}(A\Delta)
=
\width_{n^\perp}(\Delta).
$
Therefore
$$
2!\MV_2
\left(
\Delta,\Conv\{0,n\}
\right)
=
\width_{n^\perp}(\Delta).
$$

Combining this identity with Bernstein's theorem gives
$$
\deg_{\mathrm{inc}}(n,\Delta)
=
2!\MV_2
\left(
\Delta,\Conv\{0,n\}
\right)
=
\width_{n^\perp}(\Delta).
$$
Since
$
m=n^\perp,
$
this is equivalently
$
\deg_{\mathrm{inc}}(n,\Delta)
=
\width_m(\Delta).
$

Finally, by the definition
$
C_n^{\mathrm{two}}(\Delta)
=
2\deg_{\mathrm{inc}}(n,\Delta),
$
one obtains
$
C_n^{\mathrm{two}}(\Delta)
=
2\width_m(\Delta).
$
This proves the theorem.
\end{proof}

%%%%%%%%%%%%%%%%%%%%%%%%%%%%%%%%%%%%%%%%%%%%%%%%%%%%%%%%%%%%%%%%%%%%%%%%%%%%%%%%
%%%%%%%%%%%%%%%%%%%%%%%%%%%%%%%%%%%%%%%%%%%%%%%%%%%%%%%%%%%%%%%%%%%%%%%%%%%%%%%%
%%%%%%%%%%%%%%%%%%%%%%%%%%%%%%%%%%%%%%%%%%%%%%%%%%%%%%%%%%%%%%%%%%%%%%%%%%%%%%%%

\section{The Complex Incidence Degree and the Separate Real Two-Phase Estimate}

Let
$
C=\{f=0\}\subset(\C^\ast)^2
$
be a smooth algebraic curve with Newton polygon
$
\Delta=\operatorname{Newt}(f).
$
Let
$
n=(n_1,n_2)\in\Z^2
$
be a primitive normal direction and put
$
m=n^\perp=(-n_2,n_1).
$
The affine logarithmic test family is
$$
L_{\gamma,n}
=
\{x\in\R^2\mid\langle n,x\rangle=\gamma\}.
$$
The associated Laurent character is
$
\chi_n(z,w)=z^{n_1}w^{n_2}.
$
For
$
(z,w)\in(\C^\ast)^2,
$
one has
$$
\langle n,\Log(z,w)\rangle
=
\log|\chi_n(z,w)|.
$$
Hence
$$
\Log^{-1}(L_{\gamma,n})
=
\{(z,w)\in(\C^\ast)^2\mid |\chi_n(z,w)|=e^\gamma\}.
$$
This real hypersurface is foliated by the complex character slices
$
\chi_n(z,w)=e^{\gamma+i\theta},
\, 
\theta\in\R/2\pi\Z.
$

Assume that, for generic $\gamma$ and $\theta$, the system
$
f(z,w)=0,
\,
\chi_n(z,w)=e^{\gamma+i\theta}
$
is zero-dimensional and Bernstein nondegenerate. Then the complex incidence degree is
$$
\deg_{\mathrm{inc}}(n,\Delta)
=
2!\MV_2
\left(
\Delta,\Conv\{0,n\}
\right)
=
\width_m(\Delta).
$$
The two-sided complex incidence constant is defined by
$
C_n^{\mathrm{two}}(\Delta)
=
2\deg_{\mathrm{inc}}(n,\Delta).
$
Therefore
$$
C_n^{\mathrm{two}}(\Delta)
=
2\width_m(\Delta).
$$

This complex algebraic identity does not by itself imply a real contour estimate. The real bound requires an additional hypothesis controlling the phases of the logarithmic critical points lying over the modulus level.

Let
$
\gamma_{\log}:C\longrightarrow\PP^1
$
be the logarithmic Gauss map. The logarithmic critical locus is
$
\Crit(\Log|_C)
=
\gamma_{\log}^{-1}(\RP^1).
$
For generic $\gamma$, define the finite real critical incidence set
$$
\mathcal K_{\gamma,n}
=
\Crit(\Log|_C)
\cap
\Log^{-1}(L_{\gamma,n}).
$$
Every point of
$
L_{\gamma,n}\cap\CA_C
$
has at least one lift in $\mathcal K_{\gamma,n}$.

\begin{definition}[Real two-phase control in normal direction $n$]
The contour satisfies the real two-phase control hypothesis in the primitive normal direction $n$ if, for every generic $\gamma$, there exist phases
$
\theta_1(\gamma),
\, 
\theta_2(\gamma)
\in
\R/2\pi\Z
$
such that every point
$
p\in\mathcal K_{\gamma,n}
$
satisfies
$
\chi_n(p)
=
e^{\gamma+i\theta_j(\gamma)}
$
for at least one
$
j\in\{1,2\}.
$
Equivalently,
$$
\mathcal K_{\gamma,n}
\subset
\{\chi_n=e^{\gamma+i\theta_1(\gamma)}\}
\cup
\{\chi_n=e^{\gamma+i\theta_2(\gamma)}\}.
$$
We also require that, for generic $\gamma$, each of the two systems
$
f(z,w)=0,
\, 
\chi_n(z,w)=e^{\gamma+i\theta_j(\gamma)}
$
is zero-dimensional and Bernstein nondegenerate.
\end{definition}

\begin{proposition}[Real two-phase incidence bound]\label{prop:real-two-phase-normal}
Let
$
C=\{f=0\}\subset(\C^\ast)^2
$
be a smooth algebraic curve with Newton polygon $\Delta$, and let $n\in\Z^2$ be a primitive normal direction. Put
$
m=n^\perp.
$
Assume that the generic character-incidence systems are zero-dimensional and Bernstein nondegenerate, so that
$
\deg_{\mathrm{inc}}(n,\Delta)
=
\width_m(\Delta).
$
Assume moreover that the contour satisfies the real two-phase control hypothesis in direction $n$. Then, for every generic $\gamma$,
$$
\#\left(
L_{\gamma,n}\cap\CA_C
\right)
\le
2\deg_{\mathrm{inc}}(n,\Delta).
$$
Consequently,
$$
\Rdeg_n(\CA_C)
\le
C_n^{\mathrm{two}}(\Delta)
=
2\width_m(\Delta).
$$
\end{proposition}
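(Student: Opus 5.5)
The argument is a lifting-and-counting argument: we lift contour points to the finite critical incidence set $\mathcal K_{\gamma,n}$, distribute that set over the two phase slices given by the two-phase hypothesis, and bound each slice by Theorem~\ref{thm:normal-incidence-degree}.

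\emph{Step 1: lifting.} Fix a generic $\gamma$ and let $x\in L_{\gamma,n}\cap\CA_C$. By the definition of the contour there is a point $p\in\Crit(\Log|_C)$ with $\Log(p)=x$. Since $\langle n,\Log(p)\rangle=\gamma$, we get $p\in\mathcal K_{\gamma,n}$. Choosing one such lift for each $x$ defines a map $\sigma\colon L_{\gamma,n}\cap\CA_C\to\mathcal K_{\gamma,n}$. It is injective because $\Log\circ\sigma=\mathrm{id}$. Hence $\#(L_{\gamma,n}\cap\CA_C)\le\#\mathcal K_{\gamma,n}$, and this holds even if several critical points share a logarithmic image. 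Note that this inequality only needs the lifts, not any control on the multiplicity $m_{\Log}$.

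\emph{Step 2: the two phases.} By the real two-phase control hypothesis there are phases $\theta_1(\gamma),\theta_2(\gamma)$ with
$\mathcal K_{\gamma,n}\subset Z_1\cup Z_2$, where $Z_j=\{f=0,\ \chi_n=e^{\gamma+i\theta_j(\gamma)}\}\subset(\C^\ast)^2$. Therefore $\#\mathcal K_{\gamma,n}\le\#Z_1+\#Z_2$. By the second clause of the hypothesis, each system defining $Z_j$ is zero-dimensional and Bernstein nondegenerate. The Bernstein--Kushnirenko count in the proof of Theorem~\ref{thm:normal-incidence-degree} then gives
\[
\#Z_j\le 2!\MV_2(\Delta,\Conv\{0,n\})=\deg_{\mathrm{inc}}(n,\Delta),
\]
with equality when points are counted with multiplicity; the number of distinct points can only be smaller. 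Combining the steps yields $\#(L_{\gamma,n}\cap\CA_C)\le 2\deg_{\mathrm{inc}}(n,\Delta)$.

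\emph{Step 3: passing to the directional degree.} By Theorem~\ref{thm:normal-incidence-degree}, $\deg_{\mathrm{inc}}(n,\Delta)=\width_m(\Delta)$, so $2\deg_{\mathrm{inc}}(n,\Delta)=C_n^{\mathrm{two}}(\Delta)=2\width_m(\Delta)$. Taking the supremum over generic $\gamma$ gives the bound on $\Rdeg_n(\CA_C)$.

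The main obstacle is this last passage, since the definition of $\Rdeg_n$ takes the supremum over all $\gamma$ with transverse intersections, not only over generic ones. I would first argue that transverse intersection points are stable under small perturbation of $\gamma$: each transverse point of $L_{\gamma,n}\cap\CA_C$ persists for nearby $\gamma'$. A nongeneric $\gamma$ therefore cannot carry more transverse points than nearby generic values, so the supremum is already attained on the generic set. A further point to check is that the phases $\theta_j(\gamma)$ may coincide. In that case $Z_1=Z_2$ and the bound only improves, so no separate treatment is needed.
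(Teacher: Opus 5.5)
Your proposal is correct and follows the paper's proof essentially step for step. You lift each contour point to $\mathcal K_{\gamma,n}$, cover this set by the two phase slices, and bound each slice by the Bernstein count $2!\MV_2(\Delta,\Conv\{0,n\})=\width_m(\Delta)$. The one difference is your perturbation argument for non-generic $\gamma$. The paper avoids it by defining the directional degree, inside this proof, as a supremum over generic $\gamma$ only; your extra step is sound provided the generic values of $\gamma$ are dense.
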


\begin{proof}
Fix a generic value
$
\gamma\in\R.
$
Let
$
x\in L_{\gamma,n}\cap\CA_C.
$
By the definition of the amoeba contour, there exists at least one point
$
p\in C
$
such that
$
\Log(p)=x
$
and
$
\gamma_{\log}(p)\in\RP^1.
$
Since
$
x\in L_{\gamma,n},
$
one has
$
\langle n,\Log(p)\rangle=\gamma.
$
Equivalently,
$
|\chi_n(p)|=e^\gamma.
$
Therefore
$$
p\in
\Crit(\Log|_C)
\cap
\Log^{-1}(L_{\gamma,n})
=
\mathcal K_{\gamma,n}.
$$

By the real two-phase control hypothesis, there exists
$
j\in\{1,2\}
$
such that
$
\chi_n(p)
=
e^{\gamma+i\theta_j(\gamma)}.
$
Thus every contour image point has at least one lift in the union
$
\mathcal I_{\gamma,n}^{(1)}
\cup
\mathcal I_{\gamma,n}^{(2)},
$
where
$$
\mathcal I_{\gamma,n}^{(j)}
=
\left\{
(z,w)\in(\C^\ast)^2
\;\middle|\;
f(z,w)=0,\ 
\chi_n(z,w)=e^{\gamma+i\theta_j(\gamma)}
\right\}.
$$

The logarithmic map may identify distinct points of this union. Consequently, the number of distinct logarithmic image points is bounded by the number of lifts:
$$
\#\left(
L_{\gamma,n}\cap\CA_C
\right)
\le
\#
\left(
\mathcal I_{\gamma,n}^{(1)}
\cup
\mathcal I_{\gamma,n}^{(2)}
\right).
$$
The cardinality of a union is bounded by the sum of the cardinalities, hence
$$
\#\left(
L_{\gamma,n}\cap\CA_C
\right)
\le
\#
\mathcal I_{\gamma,n}^{(1)}
+
\#
\mathcal I_{\gamma,n}^{(2)}.
$$

For each
$
j\in\{1,2\},
$
the Newton polygons of the two equations defining $\mathcal I_{\gamma,n}^{(j)}$ are
$
\Delta
$
and
$
\Conv\{0,n\}.
$
By the assumed Bernstein nondegeneracy,
$$
\#
\mathcal I_{\gamma,n}^{(j)}
=
2!\MV_2
\left(
\Delta,\Conv\{0,n\}
\right),
$$
where the solutions are counted with algebraic multiplicity. Since an ordinary cardinality is at most the corresponding multiplicity count,
$$
\#
\mathcal I_{\gamma,n}^{(j)}
\le
2!\MV_2
\left(
\Delta,\Conv\{0,n\}
\right).
$$
Using
$
2!\MV_2
\left(
\Delta,\Conv\{0,n\}
\right)
=
\width_m(\Delta),
$
one obtains
$
\#
\mathcal I_{\gamma,n}^{(j)}
\le
\width_m(\Delta).
$
Therefore
$$
\#\left(
L_{\gamma,n}\cap\CA_C
\right)
\le
2\width_m(\Delta).
$$

By definition, the directional image-counting degree is
$
\Rdeg_n(\CA_C)
=
\sup_{\gamma\ \mathrm{generic}}
\#
\left(
L_{\gamma,n}\cap\CA_C
\right).
$
Taking the supremum over generic $\gamma$ gives
$
\Rdeg_n(\CA_C)
\le
2\width_m(\Delta).
$
Finally,
$
C_n^{\mathrm{two}}(\Delta)
=
2\deg_{\mathrm{inc}}(n,\Delta)
=
2\width_m(\Delta).
$
Hence
$$
\Rdeg_n(\CA_C)
\le
C_n^{\mathrm{two}}(\Delta)
=
2\width_m(\Delta).
$$
\end{proof}

%%%%%%%%%%%%%%%%%%%%%%%%%%%%%%%%%%%%%%%%%%%%%%%%%%%%%%%%%%%%%%%%%%%%%%%%%%%%%
%%%%%%%%%%%%%%%%%%%%%%%%%%%%%%%%%%%%%%%%%%%%%%%%%%%%%%%%%%%%%%%%%%%%%%%%%%%%% 

\section{Contour Degree, Logarithmic Gauss Map, and Real Incidence Correspondence}

Let $C\subset(\C^\ast)^2$ be a smooth real algebraic curve defined by a real Laurent polynomial $f(z,w)$.  
At a smooth point $p=(z,w)\in C$, the point $p$ is critical for $\Log|_C$ if and only if the tangent line $T_pC$ is not transverse to the real torus orbit through $p$.
In coordinates, this means that $zf_z(p)$ and $wf_w(p)$ are real proportional. Thus there exists $\lambda\in\R$ such that
$
zf_z(p)=\lambda\,wf_w(p).
$
If both $zf_z(p)$ and $wf_w(p)$ are nonzero, then $\lambda\in\R^\ast$ and
$
\lambda=\dfrac{zf_z(p)}{wf_w(p)}.
$
The exceptional cases where one of the two logarithmic derivatives vanishes are treated by using the homogeneous form $[zf_z:wf_w]\in\RP^1$. 
Let $n=(n_1,n_2)\in\Z^2$ be a primitive vector. For $\gamma\in\R$, set
$
L_{\gamma,n}=\{(x,y)\in\R^2\mid n_1x+n_2y=\gamma\}.
$
The directional contour degree is
$$
\mathbb R\deg_n(\CA_C)=\sup_{\gamma\in\R}\#(L_{\gamma,n}\cap\CA_C),
$$
where only transverse intersections are counted.
The real incidence correspondence associated with the direction $n$ and the translate $\gamma$ is
$$
\mathfrak I_{\gamma,n}^{\R}(C)=
\left\{
p\in C
\ \middle|\
\gamma_{\log}(p)\in\RP^1,\quad \Log(p)\in L_{\gamma,n}
\right\}.
$$
Equivalently, away from the homogeneous exceptional points, this can be written as
$$
\mathfrak I_{\gamma,n}^{\R}(C)=
\left\{
(z,w,\lambda)\in C\times\R^\ast
\ \middle|\
zf_z=\lambda wf_w,\quad n_1\log|z|+n_2\log|w|=\gamma
\right\}.
$$
The natural projection
$
\pi_{\Log}:\mathfrak I_{\gamma,n}^{\R}(C)\longrightarrow L_{\gamma,n}\cap\CA_C,\,  p\longmapsto \Log(p),
$
is surjective by definition of the contour. Hence
$$
\#(L_{\gamma,n}\cap\CA_C)\leq \#\mathfrak I_{\gamma,n}^{\R}(C)
$$
whenever both sets are finite. If the restriction of $\Log$ to $\Crit(\Log|_C)$ is generically one-to-one over the relevant part of the contour, then equality holds for generic transverse $L_{\gamma,n}$. In general the inequality may be strict, because several points of $C$ may have the same logarithmic image.
This gives the first rigorous relation: contour intersections are logarithmic images of real logarithmic-Gauss incidence points.

We now pass from the real incidence correspondence to a complex algebraic upper bound. The real equation
$$
n_1\log|z|+n_2\log|w|=\gamma
$$
is not algebraic. To algebraize the incidence, choose an integral vector
$
m=(a,b)\in\Z^2
$
and consider the character equation
$
z^aw^b=\eta,\,  \eta\in\C^\ast.
$
The logarithmic modulus of this equation is
$$
a\log|z|+b\log|w|=\log|\eta|.
$$
Therefore, if one wants this character equation to have the same logarithmic normal direction as $L_{\gamma,n}$, one should choose $m=n$. If instead one wants Bernstein's mixed volume with the segment $\Conv\{0,m\}$ to recover $\width_n(\Delta)$, one chooses $m=n^\perp$. These are different uses of the character equation and should not be confused. For the real incidence theorem below, the algebraic character used to dominate the logarithmic slice in the same normal direction is
$
m=n.
$
Fix $\eta\in\C^\ast$. Define the complex algebraic incidence set
$$
\mathfrak I_{\eta,n}^{\C}(C)=
\left\{
(z,w,\lambda)\in(\C^\ast)^2\times\C
\ \middle|\
f(z,w)=0,\quad zf_z-\lambda wf_w=0,\quad z^{n_1}w^{n_2}-\eta=0
\right\}.
$$
This is an algebraic set in $(\C^\ast)^2\times\C$. After homogenizing the $\lambda$-coordinate, one may equivalently write the logarithmic Gauss condition as an incidence with $\CP^1$. The affine form above is sufficient on the chart where $wf_w\neq0$. The homogeneous version is
$$
\mathfrak J_{\eta,n}^{\C}(C)=
\left\{
(z,w,[\lambda_0:\lambda_1])\in C\times\CP^1
\ \middle|\
\lambda_1zf_z-\lambda_0wf_w=0,\quad z^{n_1}w^{n_2}=\eta
\right\}.
$$
This homogeneous incidence correspondence avoids the exceptional cases $wf_w=0$ and $zf_z=0$.

For generic $\eta$, the set $\mathfrak J_{\eta,n}^{\C}(C)$ is finite. Define the algebraic Gauss incidence degree in the direction $n$ by
$$
D_n(C)=\#\mathfrak J_{\eta,n}^{\C}(C),
$$
counted with scheme-theoretic multiplicities for generic $\eta$. Since the incidence condition in $\CP^1$ imposes only the tautological proportionality relation defining the logarithmic Gauss parameter, the projection to $C$ identifies $\mathfrak J_{\eta,n}^{\C}(C)$ with the finite set
$
C\cap\{z^{n_1}w^{n_2}=\eta\}.
$
Thus
$$
D_n(C)=\#\{(z,w)\in C\mid z^{n_1}w^{n_2}=\eta\},
$$
counted with multiplicities.
Let $\Delta=\Newt(f)$ and let
$
S_n=\Conv\{(0,0),n\}.
$
Bernstein's theorem gives
$$
D_n(C)\leq 2!\MV_2(\Delta,S_n),
$$
with equality if the system $f=0$, $z^{n_1}w^{n_2}-\eta=0$ is nondegenerate with respect to its Newton polytopes. Since $S_n$ is a segment, the mixed volume is a lattice width:
$$
2!\MV_2(\Delta,S_n)=\width_{n^\perp}(\Delta),
$$
where $n^\perp=(-n_2,n_1)$.
This shows precisely what Bernstein's theorem controls. It controls the algebraic degree of intersection of $C$ with the character hypersurface whose exponent vector is $n$, and this degree is the width of $\Delta$ in the direction perpendicular to $n$.

If the goal is to obtain the Newton width $\width_n(\Delta)$, then the character exponent must be chosen perpendicular to $n$. Namely, put
$
m=n^\perp.
$
Then
$$
2!\MV_2(\Delta,\Conv\{0,m\})=\width_{m^\perp}(\Delta)=\width_n(\Delta).
$$
In this convention, the character equation is
$
z^{m_1}w^{m_2}=\eta,
$
and its Bernstein degree is exactly $\width_n(\Delta)$.
The  algebraic incidence statement is therefore:
$$
D_{n^\perp}(C)\leq \width_n(\Delta),
$$
with equality under Bernstein nondegeneracy. However, this algebraic degree is not by itself equal to $\#(L_{\gamma,n}\cap\CA_C)$. It is a complex algebraic degree attached to a character slice. To pass from this degree to contour intersections one needs a real comparison hypothesis.
The comparison theorem is the following.

\medskip

\begin{theorem}
Let $C\subset(\C^\ast)^2$ be a smooth real algebraic curve, let $n\in\Z^2$ be primitive, and let $\gamma\in\R$ be such that $L_{\gamma,n}$ meets $\CA_C$ transversally in finitely many points. Then
$$
L_{\gamma,n}\cap\CA_C
=
\Log\left(
\gamma_{\log}^{-1}(\RP^1)\cap\Log^{-1}(L_{\gamma,n})
\right).
$$
Consequently,
$$
\#(L_{\gamma,n}\cap\CA_C)
\leq
\#\left(
\gamma_{\log}^{-1}(\RP^1)\cap\Log^{-1}(L_{\gamma,n})
\right).
$$
If $\Log$ is injective on this finite incidence set, then equality holds. More generally, if each point of $L_{\gamma,n}\cap\CA_C$ has at most $\mu_n$ preimages in $\gamma_{\log}^{-1}(\RP^1)\cap\Log^{-1}(L_{\gamma,n})$, then
$$
\#(L_{\gamma,n}\cap\CA_C)
\leq
\#\left(
\gamma_{\log}^{-1}(\RP^1)\cap\Log^{-1}(L_{\gamma,n})
\right)
\leq
\mu_n\,\#(L_{\gamma,n}\cap\CA_C).
$$
\end{theorem}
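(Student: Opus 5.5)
The plan is to verify the set-theoretic identity directly from the definitions and then obtain the two inequalities by an elementary counting argument on the fibres of $\Log$ restricted to the finite incidence set. Put
$$
K=\gamma_{\log}^{-1}(\RP^1)\cap\Log^{-1}(L_{\gamma,n}).
$$
We shall use the characterization recalled in Section 2, going back to Mikhalkin and Passare--Rullg{\aa}rd: on the smooth curve $C$ one has $\Crit(\Log|_C)=\gamma_{\log}^{-1}(\RP^1)$. Consequently
$$
K=\Crit(\Log|_C)\cap\Log^{-1}(L_{\gamma,n}).
$$

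For the inclusion $\supseteq$, take $p\in K$. Then $p$ is a critical point, so $\Log(p)\in\CA_C$ by the definition of the contour. Also $\Log(p)\in L_{\gamma,n}$ because $p\in\Log^{-1}(L_{\gamma,n})$.

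For the inclusion $\subseteq$, take $x\in L_{\gamma,n}\cap\CA_C$. By definition $x=\Log(p)$ for some critical point $p$. Since $\Log(p)=x\in L_{\gamma,n}$, this $p$ lies in $K$. This is the same lifting step used in the proof of Proposition \ref{prop:real-two-phase-normal}. So $\Log|_K:K\to L_{\gamma,n}\cap\CA_C$ is surjective, and the identity holds.

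The first inequality is just surjectivity: a map onto a set has at least as many points in its source as in its target. Injectivity of $\Log|_K$ makes it a bijection, which gives equality.

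For the two-sided bound with $\mu_n$, write $K$ as the disjoint union of its fibres $K\cap\Log^{-1}(x)$ over $x\in L_{\gamma,n}\cap\CA_C$. Each fibre is nonempty by surjectivity and has at most $\mu_n$ elements by hypothesis. Summing over the fibres gives
$$
\#(L_{\gamma,n}\cap\CA_C)\le\#K\le\mu_n\,\#(L_{\gamma,n}\cap\CA_C).
$$

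The one delicate point is finiteness of $K$. The statement only assumes that $L_{\gamma,n}\cap\CA_C$ is finite. If every fibre has at most $\mu_n$ points, finiteness of $K$ follows immediately. Without the $\mu_n$ hypothesis, $K$ might be infinite, and then the first inequality holds trivially. There is, however, a genuine subtlety in the injective case. In principle a whole arc of critical points could lie inside $\Log^{-1}(L_{\gamma,n})$, which would happen if the contour contained a segment of $L_{\gamma,n}$. Transversality of $L_{\gamma,n}$ to $\CA_C$ excludes this. I expect this finiteness issue to be the only real obstacle. I would handle it by noting that $\Crit(\Log|_C)$ is a real-analytic curve whose image meets $L_{\gamma,n}$ transversally, so its preimage intersection is discrete. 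It is also compact near the finitely many image points, since $\Log|_C$ is proper onto its image away from the tentacles, and a discrete compact set is finite.
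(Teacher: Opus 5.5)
Your proof is correct and follows the paper's argument: lift each contour point to a critical point via $\Crit(\Log|_C)=\gamma_{\log}^{-1}(\RP^1)$ to get the set identity, use surjectivity of $\Log|_K$ for the first inequality, and count fibres for the $\mu_n$ bound. Your final paragraph is unnecessary and its argument is wrong in general, and nothing depends on it. There is no subtlety in the injective case, because an injective map into the finite set $L_{\gamma,n}\cap\CA_C$ automatically has finite domain. Your claim that transversality makes $K$ discrete fails: for $C=\{zw=1\}$ one has $\gamma_{\log}\equiv[1:1]$, so $K=\{(z,z^{-1}):|z|=e^{\gamma}\}$ is a whole circle over the single point $L_{\gamma,(1,0)}\cap\CA_C$. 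This is harmless only because the first inequality then holds trivially.
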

\medskip

\begin{proof}
By definition, $q\in L_{\gamma,n}\cap\CA_C$ if and only if there exists $p\in\Crit(\Log|_C)$ such that $\Log(p)=q$ and $q\in L_{\gamma,n}$. Since $\Crit(\Log|_C)=\gamma_{\log}^{-1}(\RP^1)$, this is equivalent to the existence of
$$
p\in\gamma_{\log}^{-1}(\RP^1)\cap\Log^{-1}(L_{\gamma,n})
$$
with $q=\Log(p)$. This proves the equality of sets after applying $\Log$. The cardinality inequality follows from the surjectivity of the map from the incidence set to the contour intersection set. The final assertion follows by bounding the size of the fibers of this map.
\end{proof}

\medskip

This theorem contains no tropical hypothesis. It tells us exactly what contour intersections are: they are logarithmic images of real logarithmic-Gauss incidence points.

To obtain a numerical upper bound from Bernstein's theorem, one needs an additional hypothesis comparing the real logarithmic incidence
$
\gamma_{\log}^{-1}(\RP^1)\cap\Log^{-1}(L_{\gamma,n})
$
with an algebraic character incidence. Such a comparison is not automatic for an arbitrary curve, because $\Log^{-1}(L_{\gamma,n})$ is the real hypersurface
$
|z|^{n_1}|w|^{n_2}=e^\gamma,
$
not the complex algebraic hypersurface
$
z^{n_1}w^{n_2}=\eta.
$
The latter fixes both modulus and argument, while the former fixes only modulus. Consequently, the Bernstein incidence degree is not a direct universal bound for the real logarithmic incidence unless one has an additional finite phase-control hypothesis.

A phase-control hypothesis can be stated as follows. For the chosen direction $n$, suppose that for every generic $\gamma$ there exists a finite set of phases
$
\Theta_{\gamma,n}\subset (S^1)^2
$
with cardinality at most $M_n$ such that every point of
$
\gamma_{\log}^{-1}(\RP^1)\cap\Log^{-1}(L_{\gamma,n})
$
lies on one of the algebraic character slices
$$
z^{m_1}w^{m_2}=\eta_\theta,
\qquad m=n^\perp,
\qquad \theta\in\Theta_{\gamma,n}.
$$
Then each such slice has at most
$
D_{n^\perp}(C)\leq\width_n(\Delta)
$
complex points by Bernstein's theorem. Therefore
$
\#(L_{\gamma,n}\cap\CA_C)
\le
M_n\,\width_{n^\perp}(\Delta).
$
If the phase-control constant is $M_n=2$, then
$
\mathbb R\deg_n(\CA_C)\leq 2\width_{n^\perp}(\Delta)
$
(see Appendix B).

In the tropical or patchworked setting this phase-control hypothesis is replaced by a geometric condition. If the contour is controlled by the tropical spine in direction $n$, then the stable tropical intersection number of $L_{\gamma,n}$ with the spine $\Gamma=\Trop(C)$ is
$
I_n(\Gamma)=\width_{n^\perp}(\Delta),
$
(see Appendix D). If each stable tropical intersection of multiplicity $r$ contributes at most $2r$ real contour intersections, and if there are no extra contributions from vertex charts or from ramification of $\gamma_{\log}$, then
$$
\#(L_{\gamma,n}\cap\CA_C)\leq2I_n(\Gamma)=2\width_{n^\perp}(\Delta).
$$
Taking the supremum over $\gamma$ gives
$
\mathbb R\deg_n(\CA_C)\leq2\width_{n^\perp}(\Delta).
$
In the presence of ramification, the corrected theorem is
$$
\mathbb R\deg_n(\CA_C)\leq2\width_{n^\perp}(\Delta)+E_n^{\rm vert}(\tau,s)+R_n(\gamma_{\log}),
$$
where
$$
R_n(\gamma_{\log})
=
\sum_{p\in\Ram(\gamma_{\log})\cap\gamma_{\log}^{-1}(\Lambda_n)}
\ord_p(\Ram(\gamma_{\log})).
$$
Here $\Lambda_n\in\RP^1$ is the logarithmic tangent direction corresponding to the tested affine-line direction. This correction is necessary because branch values of $\gamma_{\log}$ may create tangencies, multiplicity jumps, or local merging of contour branches.
The final rigorous conclusion is therefore as follows.
The identity
$$
L_{\gamma,n}\cap\CA_C
=
\Log\left(
\gamma_{\log}^{-1}(\RP^1)\cap\Log^{-1}(L_{\gamma,n})
\right)
$$
is always true. It is the exact relationship between contour intersections and real logarithmic Gauss fibers.
The Bernstein formula
$$
2!\MV_2(\Delta,\Conv\{0,n^\perp\})=\width_n(\Delta)
$$
is always the correct algebraic incidence degree after choosing the binomial exponent perpendicular to the affine-line normal.
The inequality
$$
\mathbb R\deg_n(\CA_C)\leq2\width_{n^\perp}(\Delta)
$$
is not a universal consequence of these two facts. It is a conditional theorem requiring a phase-control, spine-control, simple Harnack, or primitive patchworking hypothesis which ensures that the real logarithmic Gauss incidence contributes at most two real contour branches per unit of Bernstein incidence degree, and that vertex and ramification corrections vanish.

%%%%%%%%%%%%%%%%%%%%%%%%%%%%%%%%%%%%%%%%%%%%%%%%%%%%%%%%%%%%%%%%%%%%%%%%%%%%

%%%%%%%%%%%%%%%%%%%%%%%%%%%%%%%%%%%%%%%%%%%%%%%%%%%%%%%%%%%%%%%%%%%%%%%%%%%%%%%%

\section{Spine-Controlled Contours and Directional Real Degree}

\begin{definition}[Stable intersection number with the spine]
Let
$
\Gamma\subset\R^2
$
be a weighted tropical curve, and let
$
m\in\Z^2
$
be primitive. For a generic affine line
$
L_{\gamma,m}=\{x\in\R^2\mid \langle m,x\rangle=\gamma\},
$
the stable tropical intersection number of $L_{\gamma,m}$ with $\Gamma$ is
$
I_m(\Gamma)
=
L_{\gamma,m}\cdot_{\mathrm{st}}\Gamma.
$
Equivalently, if $L_{\gamma,m}$ meets $\Gamma$ only in the relative interiors of edges, then
$$
I_m(\Gamma)
=
\sum_{p\in L_{\gamma,m}\cap\Gamma}
w(e_p)|\det(m,u_{e_p})|,
$$
where $e_p$ is the edge of $\Gamma$ containing $p$, $u_{e_p}\in\Z^2$ is the primitive direction vector of $e_p$, and $w(e_p)$ is the tropical weight of $e_p$.
\end{definition}

\begin{theorem}[Spine control implies the two-sided contour bound]
Let
$
C\subset(\C^\ast)^2
$
be a smooth real algebraic curve. Assume that its amoeba contour
$
\CA_C
$
is spine-controlled in the following precise sense. There is a weighted tropical spine
$
\Gamma\subset\R^2
$
such that, outside pairwise disjoint sufficiently small neighborhoods of the vertices of $\Gamma$, the contour $\CA_C$ is a two-sided real analytic smoothing of $\Gamma$, and inside each vertex neighborhood no additional contour branch contributes more intersections with generic logarithmic lines than those already forced by the adjacent weighted edges of $\Gamma$. Then, for every primitive vector
$
m\in\Z^2,
$
one has
$$
\mathbb R\deg_m(\CA_C)\leq 2I_m(\Gamma).
$$
\end{theorem}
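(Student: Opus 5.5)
Fix a primitive $m\in\Z^2$ and a generic $\gamma$, with $L=L_{\gamma,m}$ generic: $L$ avoids the vertices of $\Gamma$, meets $\Gamma$ only in relative interiors of edges, and meets $\CA_C$ transversally in finitely many points. We may also assume that the vertex neighborhoods $U_v$ are small enough that $L$ misses every $U_v$. Genericity allows this: the vertex set is finite, so for all but finitely many $\gamma$ the line $L_{\gamma,m}$ stays a positive distance from every vertex, and the neighborhoods can be shrunk below that distance. Shrinking is permitted because the hypothesis says ``sufficiently small''. For the finitely many lines that must pass through some $U_v$, we use instead the second clause of spine control. It says that the number of intersections of $L$ with $\CA_C\cap U_v$ is at most the number forced by the adjacent weighted edges, which is $2\sum_i \ell_i|\det(m,u_i)|$ restricted to the portion of $L$ near $v$. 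So the contribution of $U_v$ is absorbed into the edge count. This is exactly the statement that the vertex correction $E_m(v;\tau,s)$ vanishes.

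\textbf{Step 1: reduce to edges.} Decompose
\[
L\cap\CA_C=\Bigl(L\cap\CA_C\setminus\bigcup_v U_v\Bigr)\cup\bigcup_v (L\cap\CA_C\cap U_v).
\]
By the above, the second part is empty for generic $\gamma$. Hence it suffices to count intersections of $L$ with the part of $\CA_C$ lying over tubular neighborhoods $T_e$ of the edges $e$ of $\Gamma$. For each point $p\in L\cap\Gamma$, lying on the edge $e_p$, only the piece of $\CA_C$ inside $T_{e_p}$ near $p$ can meet $L$ there. This uses the fact that the contour lies in a neighborhood of $\Gamma$ away from the vertices, which is part of ``smoothing of $\Gamma$''.

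\textbf{Step 2: local count at an edge crossing.} By hypothesis, over $T_e$ the contour is a two-sided real analytic smoothing of the weighted edge $e$. I interpret this as follows: $\CA_C\cap T_e$ consists of at most $2w(e)$ real analytic arcs, each $C^1$-close to a parallel translate of $e$, hence each a graph over $e$. A line $L$ transverse to $e$, that is with $\det(m,u_e)\neq0$, meets each graph-like arc near $p$ in at most $|\det(m,u_e)|$ points. This is the main obstacle, because a single Euclidean crossing occurs when $L$ is transverse. To get the factor $|\det(m,u_e)|$, I would first try reading the smoothing in lattice coordinates. Apply $A\in\operatorname{GL}(2,\Z)$ sending $u_e$ to $e_1$, as in the proof of Theorem~\ref{thm:normal-incidence-degree}. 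Then the weight-$w(e)$ edge with lattice multiplicity $|\det(m,u_e)|$ corresponds to $w(e)|\det(m,u_e)|$ stable intersection points, after a small perturbation of $L$ in the stable-intersection sense. Each stable point is covered by at most two contour branches, one per side. Since the real count is bounded by the complex count, I would justify the bound by taking the branch count per side to be the local Bernstein number. This number is the mixed area of the dual edge segment with $\Conv\{0,m^\perp\}$, namely $w(e)|\det(m,u_e)|$ by the width identity proved in Theorem~\ref{thm:normal-incidence-degree}. I therefore expect to interpret ``two-sided smoothing of the weighted edge'' so that each side carries at most $w(e)|\det(m,u_e)|$ intersections with $L$.

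\textbf{Step 3: sum.} Adding the contributions over $p\in L\cap\Gamma$ gives
\[
\#(L\cap\CA_C)\le\sum_{p}2\,w(e_p)|\det(m,u_{e_p})|=2I_m(\Gamma)
\]
by the definition of stable intersection number. Taking the supremum over generic $\gamma$ yields $\mathbb R\deg_m(\CA_C)\le 2I_m(\Gamma)$. The bound then extends to all transverse lines, since nongeneric $\gamma$ are limits of generic ones and transverse intersection points persist under small perturbation.
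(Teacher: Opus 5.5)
Your skeleton matches the paper's proof. You split $L\cap\CA_C$ into the part over thin tubes around edge pieces and the part inside the vertex neighborhoods $U_v$, bound each by twice the local weighted crossing count, and sum to $2I_m(\Gamma)$. However, two steps fail as written.

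\textbf{The vertex step.} You cannot shrink the $U_v$ so that a generic $L$ misses them. The neighborhoods come with the hypothesis, and shrinking them demands the thin two-sheeted picture on a larger region, which is not given. For instance, the contour of $1+z+w=0$ passes through $(\log 2,0)$, at distance $\log 2/\sqrt2$ from $\Gamma$, so vertex neighborhoods of radius below $\log 2$ do not work. The $U_v$ also cannot depend on $\gamma$, since $\mathbb R\deg_m$ is a supremum over $\gamma$ for one fixed contour. For fixed $U_v$, the set of $\gamma$ with $L_{\gamma,m}\cap U_v\neq\varnothing$ is an interval. So ``the second part is empty for generic $\gamma$'' is false, and the lines through some $U_v$ are not finitely many. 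Your fallback must carry the whole vertex step: bound $\#(L\cap\CA_C\cap U_v)$ by twice the stable intersection of $L$ with $\Gamma\cap U_v$, and in the edge step count only crossings outside $U=\bigcup_v U_v$. That is exactly the paper's argument.

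\textbf{The edge step.} The ``main obstacle'' in Step 2 is not one, and the Bernstein detour should be dropped. You only need an upper bound. Under your own reading (at most $2w(e)$ arcs, each $C^1$-close to a translate of $e$), a line transverse to $e$ meets the contour near the crossing in at most $2w(e)$ points. The local multiplicity there is $w(e)$ times a nonzero integer, so $2w(e)$ is already at most twice it. The paper simply reads $2w(e)|\det(m,u_e)|$ off the hypothesis. The appeal to a local Bernstein number is not an argument: nothing in the hypothesis ties the real contour sheets over an edge to a complex system. ``Real count $\le$ complex count'' is exactly what the paper says fails without phase control, because $\Log^{-1}(L)$ fixes only $|z^{m_1}w^{m_2}|$ and is not a character slice.

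\textbf{Which multiplicity to use.} The inequality above needs the correct local multiplicity. Since $m$ is the normal of $L$, transversality means $\langle m,u_e\rangle\neq0$, not $\det(m,u_e)\neq0$. The multiplicity consistent with $I_m(\Gamma)=\width_{m^\perp}(\Delta)$ is $w(e)|\langle m,u_e\rangle|$. Taken literally, $|\det(m,u_e)|$ gives budget $0$ to an edge crossed perpendicularly; this slip also appears in the paper's definition of $I_m$. For example, take the left tentacle of $1+z+w=0$ with $m=(1,0)$ and $\gamma\ll0$: there $L$ meets the contour at the two points $(\gamma,\log(1\pm e^\gamma))$.

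\textbf{Edges parallel to $L$.} Step 1's claim that contour points of $L$ outside $U$ lie near crossing points fails when $L$ is parallel to an unbounded edge. For $1+z+w=0$ and $L=\{y=\gamma\}$ with $0<\gamma\ll1$, $L$ meets the branch $y=\log(1+e^x)$ at $x=\log(e^\gamma-1)$. This point is far along the horizontal tentacle, where $L$ does not cross $\Gamma$. The edge-by-edge bookkeeping, yours and the paper's alike, tacitly reads the smoothing hypothesis as excluding such hits.
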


\begin{proof}
Fix a primitive vector
$
m\in\Z^2.
$
For a real number $\gamma$, write
$
L_{\gamma,m}=\{x\in\R^2\mid \langle m,x\rangle=\gamma\}.
$
It is enough to prove that, for every generic $\gamma$ for which $L_{\gamma,m}$ is transverse to $\CA_C$ and to the relevant smooth edge pieces of $\Gamma$, one has
$
\#(L_{\gamma,m}\cap\CA_C)\leq 2I_m(\Gamma).
$
Taking the supremum over such $\gamma$ then gives the desired inequality for
$
\mathbb R\deg_m(\CA_C).
$

Choose pairwise disjoint open neighborhoods
$
U_v
$
of the vertices $v\in\Gamma^{(0)}$ small enough so that each component of
$
\Gamma\setminus\bigcup_v U_v
$
is contained in the relative interior of a single edge of $\Gamma$. Put
$
U=\bigcup_v U_v.
$
By the spine-control hypothesis, on
$
\R^2\setminus U
$
the contour $\CA_C$ consists of two real analytic sheets which are small smoothings of the corresponding weighted edge pieces of $\Gamma$. More precisely, for an edge $e$ of $\Gamma$ with primitive direction $u_e$ and weight $w(e)$, the edge contribution to the stable intersection with $L_{\gamma,m}$ is
$
w(e)|\det(m,u_e)|.
$
The two-sided smoothing hypothesis says that the part of the contour lying over this edge has at most two real analytic sides above each weighted edge sheet. Therefore the number of transverse intersections of $L_{\gamma,m}$ with the contour over the edge $e$, away from vertex neighborhoods, is bounded by
$
2w(e)|\det(m,u_e)|.
$

Summing over all edge pieces met by $L_{\gamma,m}$ outside the vertex neighborhoods gives
$$
\#\bigl(L_{\gamma,m}\cap\CA_C\cap(\R^2\setminus U)\bigr)
\leq
2\sum_{p\in L_{\gamma,m}\cap\Gamma\cap(\R^2\setminus U)}
w(e_p)|\det(m,u_{e_p})|.
$$
The right-hand side is twice the part of the stable tropical intersection number contributed by the edge intersections outside the vertex neighborhoods.
It remains to control the intersections inside the vertex neighborhoods. Let
$
v\in\Gamma^{(0)}.
$
The affine line $L_{\gamma,m}$ may enter $U_v$ and intersect local contour branches which are not contained in the edge regions outside $U_v$. By the second part of the spine-control hypothesis, the number of such additional contour intersections inside $U_v$ is no larger than the number already forced by the adjacent weighted edges of $\Gamma$. In other words, the local vertex contribution satisfies
$$
\#(L_{\gamma,m}\cap\CA_C\cap U_v)
\leq
2\sum_{p\in L_{\gamma,m}\cap\Gamma\cap U_v}^{\mathrm{st}}
w(e_p)|\det(m,u_{e_p})|,
$$
where the superscript ``$\mathrm{st}$'' means that, if the line passes through the vertex or meets a non-transverse local configuration, the contribution is interpreted by the stable tropical intersection multiplicity obtained after a sufficiently small generic translation of the line.

Adding these estimates over all vertex neighborhoods gives
$$
\#(L_{\gamma,m}\cap\CA_C\cap U)
\leq
2\sum_{p\in L_{\gamma,m}\cap\Gamma\cap U}^{\mathrm{st}}
w(e_p)|\det(m,u_{e_p})|.
$$

Combining the estimates outside and inside the vertex neighborhoods yields
$$
\#(L_{\gamma,m}\cap\CA_C)
\leq
2\sum_{p\in L_{\gamma,m}\cap\Gamma}^{\mathrm{st}}
w(e_p)|\det(m,u_{e_p})|.
$$
By the definition of the stable tropical intersection number,
$$
\sum_{p\in L_{\gamma,m}\cap\Gamma}^{\mathrm{st}}
w(e_p)|\det(m,u_{e_p})|
=
I_m(\Gamma).
$$
Therefore
$
\#(L_{\gamma,m}\cap\CA_C)\leq 2I_m(\Gamma).
$
Since this holds for every generic transverse affine logarithmic line of normal direction $m$, taking the supremum over $\gamma$ gives
$
\mathbb R\deg_m(\CA_C)\leq 2I_m(\Gamma).
$
This proves the theorem.
\end{proof}

\begin{remark}
The factor $2$ comes from the hypothesis that the contour is a two-sided real analytic smoothing of the tropical spine away from the vertex neighborhoods. The stable tropical number $I_m(\Gamma)$ counts the weighted intersections of the affine test line with the spine. The two-sided smoothing allows at most two contour branches over each such weighted tropical intersection. The vertex hypothesis is precisely what prevents additional local contour branches near vertices from increasing the count beyond this two-sided edge contribution.
\end{remark}
  
%%%%%%%%%%%%%%%%%%%%%%%%%%%%%%%%%%%%%%%%%%%%%%%%%%%%%%%%%%%%%%%%%%%%%%%%%%%%

\section{How Patchworking Controls Contour Degrees Through Tropical Widths and Logarithmic Gauss Maps}

Let $C_t\subset(\C^\ast)^2$ be a real Viro patchworked curve with Newton polygon $\Delta$, regular subdivision $\tau$, sign distribution $s$, and tropical limit
$
\Gamma=\Trop(C_t).
$
The curve $\Gamma$ is the tropical spine dual to the subdivision $\tau$. The fundamental point is that, for sufficiently small $t>0$, the real curve $C_t$ is assembled from local algebraic pieces corresponding to the cells of $\tau$, while its amoeba is concentrated near the tropical spine $\Gamma$. Thus the large-scale geometry of the amoeba and many features of its contour are controlled by the combinatorics of $\Gamma$.

Recall that for a smooth plane curve $C_t=\{f_t=0\}$, the critical locus of $\Log|_{C_t}$ is described by the logarithmic Gauss map
$
\gamma_{\log}:C_t\longrightarrow\mathbb CP^1,\,  (z,w)\longmapsto [zf_z:wf_w].
$
As we know that 
$
\Crit(\Log|_{C_t})=\gamma_{\log}^{-1}(\mathbb RP^1), 
$
therefore the contour is the logarithmic image of the real fibers of $\gamma_{\log}$:
$
\CA_{C_t}=\Log(\gamma_{\log}^{-1}(\mathbb RP^1)).
$

Patchworking enters because the local form of $f_t$ near an edge of the tropical spine is asymptotically binomial. If an edge $e$ of $\Gamma$ has primitive direction $u_e$ and tropical weight $w_e$, then the local patchworking chart near $e$ is controlled by a binomial supported on the dual edge of the subdivision. In maximal real phase, this binomial chart produces real branches whose contour is a two-sided smoothing of the tropical edge $e$.
Recall that  maximal real phase means that the sign distribution is chosen so that each local chart is the standard real pair-of-pants model, and the real branches coming from adjacent triangles glue together without creating unnecessary cancellations. As a consequence, the real part of the algebraic curve is as large as permitted by the patchworking construction, and its amoeba contour is locally a two-sided smoothing of the corresponding tropical curve.

Now fix a primitive direction $m\in\Z^2$ and consider affine lines
$
L_{\gamma,m}=\{x\in\R^2\mid \langle m,x\rangle=\gamma\}.
$
The directional contour degree is
$
\mathbb R\deg_m(\CA_{C_t})
=
\sup_{\gamma\in\R}\#(L_{\gamma,m}\cap\CA_{C_t}),
$
with transverse intersections counted.

The tropical intersection number of $L_{\gamma,m}$ with the spine $\Gamma$ is
$
I_m(\Gamma)
=
\sum_{p\in L_{\gamma,m}\cap\Gamma}
w_{e(p)}|\det(m,u_{e(p)})|.
$
By the duality between tropical curves and Newton polygons,
$
I_m(\Gamma)=\width_{m^\perp}(\Delta),
$
where\\
$
\width_{m^\perp}(\Delta)=\max_{q\in\Delta}\langle m^\perp,q\rangle-\min_{q\in\Delta}\langle m^\perp,q\rangle.
$
Thus $\width_{m^\perp}(\Delta)$ measures the weighted number of tropical intersections of the testing line with the tropical spine (the proof can be found in Appendix D).
The reason a factor $2$ appears in contour-degree estimates is that an edge of the tropical spine usually gives a two-sided contour smoothing. Hence each stable tropical intersection contributes at most two nearby real intersections of $L_{\gamma,m}$ with the amoeba contour. This gives the expected estimate
$
\mathbb R\deg_m(\CA_{C_t})\leq 2\,I_m(\Gamma)+E_m.
$
Using $I_m(\Gamma)=\width_{m^\perp}(\Delta)$, one obtains
$$
\mathbb R\deg_m(\CA_{C_t})\leq 2\,\width_{m^\perp}(\Delta)+E_m.
$$

The correction term $E_m$ has two sources. The first source comes from vertices of the tropical spine. Near a vertex of $\Gamma$, the local equation is not binomial but a polynomial supported on the dual two-dimensional cell of $\tau$. If the subdivision is unimodular and the real phase is maximal, this local polynomial is equivalent to a real trinomial and creates no extra local contour components. In that case the vertex correction vanishes. If the local cell is non-unimodular or the real phase is not maximal, then additional local arcs or compact contour ovals may appear, and these are recorded by the vertex part of $E_m$.

The second source is the ramification of the logarithmic Gauss map. If the tested direction corresponds to a branch value of $\gamma_{\log}$, then contour branches may become tangent to the testing affine line or merge locally. This creates extra multiplicity or changes the transverse intersection count. This contribution is measured by
$$
R_m(\gamma_{\log})
=
\sum_{p\in\Ram(\gamma_{\log})\cap\gamma_{\log}^{-1}(\Lambda_m)}
\operatorname{ord}_p(\Ram(\gamma_{\log})),
$$
where $\Lambda_m\in\mathbb RP^1$ is the logarithmic tangent direction determined by the normal vector $m$.

Thus the refined patchworking estimate has the form
$
\mathbb R\deg_m(\CA_{C_t})
\leq
2\,\width_m(\Delta)+E_m(\tau,s)+R_m(\gamma_{\log}).
$
For primitive maximal patchworkings and generic directions $m$, one typically has
$
E_m(\tau,s)$ $=0
$
and
$
R_m(\gamma_{\log})=0.
$
Then the estimate becomes the clean tropical width bound
$
\mathbb R\deg_m(\CA_{C_t})\leq 2\,\width_{m^\perp}(\Delta).
$
In favorable cases this bound is sharp. For a generic line, $\Delta=\Delta_2$ and
$
\width_{(1,-1)}(\Delta_2)$ $=2.
$
Therefore
$
2\,\width_{(1,-1)}(\Delta_2)=4,
$
and indeed
$
\mathbb R\deg(\CA_C)=4.
$
For the curve
$
f(z,w)=2z^2-3z+3-w,
$
the Newton polygon is
$
\Delta=\Conv\{(0,0),(2,0),(0,1)\},
$
and
$
\width_{(1,-1)}(\Delta)=3.
$
Hence the refined tropical prediction gives
$
2\,\width_{(1,-1)}(\Delta)=6,
$
which is attained by the contour in the direction $m=(1,-1)$.
This explains the role of patchworking in contour-degree estimates. Patchworking gives a tropical spine $\Gamma$ and local algebraic charts. The tropical spine gives the leading term through the lattice width $\width_{m^\perp}(\Delta)$. The logarithmic Gauss map controls the contour itself and detects possible ramification corrections. When the patchworking is primitive, maximal, and transverse, the contour degree is governed by the simple formula
$
\mathbb R\deg_m(\CA_{C_t})\leq 2\,\width_{m^\perp}(\Delta).
$

%%%%%%%%%%%%%%%%%%%%%%%%%%%%%%%%%%%%%%%%%%%%%%%%%%%%%%%%%%%%%%%%%%%%%%%%%%%%
 %%%%%%%%%%%%%%%%%%%%%%%%%%%%%%%%%%%%%%%%%%%%%%%%%%%%%%%%%%%%%%%%%%%%%%%%%%%%

\section{Finite partition of directions and rational-direction lower bounds}

Let
$
C\subset(\mathbb C^\ast)^2
$
be a smooth real algebraic curve, and let
$
X=\CA_C\subset\R^2
$
be its amoeba contour. For
$
m\in S^1
$
and
$
\gamma\in\R,
$
write
$
L_{\gamma,m}=\{x\in\R^2:\langle m,x\rangle=\gamma\}.
$
The directional contour degree is
$$
\mathbb R\deg_m(X)=\sup_\gamma \#(L_{\gamma,m}\cap X),
$$
where the supremum is taken over generic values of $\gamma$ for which the intersection is finite and transverse. The full real contour degree is
$
\mathbb R\deg(X)=\sup_{m\in S^1}\mathbb R\deg_m(X).
$
The rational unit directions are
$
S^1_\mathbb Q=S^1\cap\mathbb Q^2.
$
Since
$
S^1_\mathbb Q\subset S^1,
$
one always has
$\di
\sup_{m\in S^1_\mathbb Q}\mathbb R\deg_m(X)\leq \mathbb R\deg(X).
$
This inequality is the general statement. Equality requires an additional stability condition.

%%%%%%%%%%%%%%%%%%%%%%%%%%%%%%%%%%%%%%%%%%%%%%%%%%%%%%%%%%%%%%%%%%%%%%%%%%%%%

\subsection*{Real Logarithmic Conormal Incidence of a Plane Curve} Let's start by giving some precise definitions.
Let
$
C=\{f=0\}\subset(\C^\ast)^2
$
be a smooth algebraic curve, where
$
f\in\C[z^{\pm1},w^{\pm1}]
$
is a Laurent polynomial. The logarithmic tangent space at a point
$
p=(z,w)\in C
$
is the image of the ordinary tangent space under the logarithmic differential
$
d\log:T_p(\C^\ast)^2\longrightarrow\C^2,
\,
(\xi_z,\xi_w)\longmapsto
\left(\frac{\xi_z}{z},\frac{\xi_w}{w}\right).
$
It is denoted by
$
T_p^{\log}C=d\log(T_pC)\subset\C^2.
$
Since $C$ is a smooth complex curve, $T_p^{\log}C$ is a complex line in $\C^2$. Its annihilator in the logarithmic cotangent space is a projective point in $\PP^1$ and is called the logarithmic conormal direction of $C$ at $p$.
For a hypersurface equation $f=0$, the logarithmic differential of $f$ is
$\di
d_{\log}f
=
z\frac{\partial f}{\partial z}\frac{dz}{z}
+
w\frac{\partial f}{\partial w}\frac{dw}{w}.
$
Thus the logarithmic conormal direction is represented by
$\di
\left[
z\frac{\partial f}{\partial z}(z,w):
w\frac{\partial f}{\partial w}(z,w)
\right]\in\PP^1.
$
This is the logarithmic Gauss map
$
\gamma_{\log}:C\longrightarrow\PP^1,
\, 
(z,w)\longmapsto
\left[
z f_z(z,w):
w f_w(z,w)
\right].
$

\begin{definition}[Logarithmic conormal incidence]
The logarithmic conormal incidence of $C$ is
$$
\mathcal I_{\log}(C)
=
\left\{
\bigl((z,w),[\lambda:\mu]\bigr)\in C\times\PP^1
\;\middle|\;
[\lambda:\mu]=[z f_z(z,w):w f_w(z,w)]
\right\}.
$$
Equivalently,
$$
\mathcal I_{\log}(C)
=
\left\{
\bigl((z,w),[\lambda:\mu]\bigr)\in C\times\PP^1
\;\middle|\;
f(z,w)=0,\quad
\lambda\,w f_w(z,w)-\mu\,z f_z(z,w)=0
\right\}.
$$
\end{definition}

The equation
$
\lambda\,w f_w-\mu\,z f_z=0
$
means that $[\lambda:\mu]$ is proportional to the logarithmic gradient
$
[z f_z:w f_w].
$
Therefore $\mathcal I_{\log}(C)$ is the graph of $\gamma_{\log}$.

\begin{definition}[Real logarithmic conormal incidence]
The real logarithmic conormal incidence of $C$ is the locus
$
\mathcal I_{\log}^{\R}(C)
=
\gamma_{\log}^{-1}(\RP^1)
\subset C.
$
Equivalently,
$$
\mathcal I_{\log}^{\R}(C)
=
\left\{
(z,w)\in C
\;\middle|\;
[z f_z(z,w):w f_w(z,w)]\in\RP^1
\right\}.
$$
\end{definition}

This condition means that the two complex numbers
$
z f_z(z,w)
\, \text{and}\, 
w f_w(z,w)
$
are real proportional. Hence, wherever $w f_w\neq0$, it is equivalent to
$
\dfrac{z f_z(z,w)}{w f_w(z,w)}\in\R.
$
A global equation avoiding division is
$
\operatorname{Im}\left(
z f_z(z,w)\,\overline{w f_w(z,w)}
\right)=0.
$
Thus
$$
\mathcal I_{\log}^{\R}(C)
=
\left\{
(z,w)\in(\C^\ast)^2
\;\middle|\;
f(z,w)=0,\quad
\operatorname{Im}\left(
z f_z(z,w)\,\overline{w f_w(z,w)}
\right)=0
\right\}.
$$
The connection with the amoeba contour is fundamental. The logarithmic map is
$
\Log:(\C^\ast)^2\longrightarrow\R^2,
\, 
(z,w)\longmapsto(\log|z|,\log|w|).
$
A point $p\in C$ is critical for $\Log|_C$ if and only if the logarithmic tangent line $T_p^{\log}C$ contains a nonzero purely imaginary vector. This is equivalent to saying that the annihilator of $T_p^{\log}C$ is represented by a real projective covector. Since that annihilator is precisely
$
[z f_z(p):w f_w(p)],
$
one obtains
$
\Crit(\Log|_C)=\mathcal I_{\log}^{\R}(C).
$
Consequently,
$$
\CA_C
=
\Log\left(\Crit(\Log|_C)\right)
=
\Log\left(\mathcal I_{\log}^{\R}(C)\right).
$$

\begin{proposition}
Let $C=\{f=0\}\subset(\C^\ast)^2$ be smooth. Then
$
\mathcal I_{\log}^{\R}(C)
=
\Crit(\Log|_C),
$
and therefore
$
\CA_C
=
\Log\left(\mathcal I_{\log}^{\R}(C)\right).
$
\end{proposition}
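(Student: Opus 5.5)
The proposition reduces to a pointwise linear-algebra statement at a smooth point $p=(z,w)\in C$; the identity for $\CA_C$ then follows by applying $\Log$. I plan to work in logarithmic coordinates. Near $p$, write $z=e^{u_1}$ and $w=e^{u_2}$ with $u=(u_1,u_2)$ a local branch of the complex logarithm. Then $\Log=\operatorname{Re}(u)$. In these coordinates $T_p^{\log}C$ is the complex line $\ker(\alpha)\subset\C^2$, where $\alpha=(zf_z(p),wf_w(p))$, because $d_{\log}f=zf_z\,du_1+wf_w\,du_2$. Smoothness of $C$ gives $(f_z,f_w)\neq0$ at $p$, so $\alpha\neq0$ and $\ker\alpha$ is indeed a line. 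The differential $d(\Log|_C)_p$ is then the real-linear map $\ker\alpha\to\R^2$, $v\mapsto\operatorname{Re}v$, between real $2$-dimensional spaces.

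First, the point $p$ is critical exactly when this map fails to be injective, i.e.\ when $\ker\alpha$ contains a nonzero vector $v=i y$ with $y\in\R^2$. Second, I would show that this happens if and only if $[\alpha]\in\RP^1$.

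For the forward direction, suppose $\alpha(iy)=0$ with $y\neq0$ real. Then $\alpha_1y_1+\alpha_2y_2=0$, so $\alpha$ is a complex multiple of the real vector $(y_2,-y_1)$, and hence $[\alpha]$ is real.

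For the converse, suppose $\alpha=c\,\beta$ with $c\in\C^\ast$ and $\beta\in\R^2\setminus\{0\}$. Then $y=(-\beta_2,\beta_1)$ is real, nonzero, and satisfies $\alpha(iy)=0$.

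This gives $\Crit(\Log|_C)=\gamma_{\log}^{-1}(\RP^1)=\mathcal I_{\log}^{\R}(C)$. Applying $\Log$ and using the definition $\CA_C=\Log(\Crit(\Log|_C))$ yields the second claim.

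The only delicate step is the exceptional points where one of $zf_z$ or $wf_w$ vanishes. The homogeneous formulation handles these automatically: for example, $[0:wf_w]=[0:1]\in\RP^1$, and the argument above never divides by either coordinate. I would also remark that the equation $\operatorname{Im}(zf_z\overline{wf_w})=0$ is equivalent to real proportionality when $\alpha\neq0$. Indeed, $\alpha_1\overline{\alpha_2}\in\R$ holds exactly when the arguments of $\alpha_1$ and $\alpha_2$ agree mod $\pi$ or one of them is zero. This justifies the non-divided description used in the definition.
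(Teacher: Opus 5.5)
Your proof is correct and follows the same route as the paper. Both identify $d(\Log|_C)_p$ with $\Rea$ on the logarithmic tangent line $T_p^{\log}C=\ker(zf_z,wf_w)$, observe that criticality means this line contains a nonzero purely imaginary vector, and translate that into $[zf_z:wf_w]\in\RP^1$. You additionally supply the explicit linear-algebra check of that last equivalence, which the paper only asserts. Your description of criticality as non-injectivity of a real-linear map between $2$-dimensional spaces is also the correct version of the paper's ``rank less than one'' wording.
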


\begin{proof}
Let $p=(z,w)\in C$. The logarithmic tangent space
$
T_p^{\log}C\subset\C^2
$
is a complex line. The differential of $\Log$ is the real part of the logarithmic differential, namely
$
d\Log=\Rea\circ d\log.
$
Thus $p$ is critical for $\Log|_C$ exactly when the real-linear map
$
\Rea:T_p^{\log}C\longrightarrow\R^2
$
has rank less than one. Since $T_p^{\log}C$ is a complex line, this rank drop occurs precisely when $T_p^{\log}C$ contains a nonzero purely imaginary vector. Equivalently, the logarithmic conormal line annihilating $T_p^{\log}C$ is real projective.
The logarithmic conormal line is represented by
$
[z f_z(p):w f_w(p)].
$
Hence $p$ is critical for $\Log|_C$ precisely when
$
[z f_z(p):w f_w(p)]\in\RP^1.
$
This is exactly the defining condition for $\mathcal I_{\log}^{\R}(C)$. Therefore
$
\Crit(\Log|_C)=\mathcal I_{\log}^{\R}(C),
$
and applying $\Log$ gives
$
\CA_C=\Log(\mathcal I_{\log}^{\R}(C)).
$
\end{proof}

If $f$ has real coefficients, the real locus
$
C(\R)=C\cap(\R^\ast)^2
$
is not the same thing as $\mathcal I_{\log}^{\R}(C)$. The latter is usually larger. It consists of complex points of $C$ whose logarithmic conormal direction is real. Thus the word ``real'' refers to the conormal direction, not necessarily to the point $(z,w)$.
For explicit computations one writes
$
A_f=z f_z,
\,
B_f=w f_w.
$
Then
$
\mathcal I_{\log}^{\R}(C)=\{f=0,\ \Ima(A_f\overline{B_f})=0\}.
$
If
$
z=x+iy,
\,
w=u+iv,
$
then this becomes the real system
$
\Rea f(x+iy,u+iv)=0,
$\, 
$
\Ima f(x+iy,u+iv)=0,
$\, and 
$
\Ima\left(
A_f(x+iy,u+iv)\overline{B_f(x+iy,u+iv)}
\right)=0.
$
The first two equations cut out the complex curve $C$ as a real surface in $(\C^\ast)^2$, and the third equation cuts out a real curve on that surface. Its logarithmic image is the amoeba contour  (see some examples in Appendix A).

%%%%%%%%%%%%%%%%%%%%%%%%%%%%%%%%%%%%%%%%%%%%%%%%%%%%%%%%%%%%%%%%%%%%%%%%%%%%%%%

\medskip

\begin{theorem}\label{thm:finite-partition-revised}% 
Assume that the real logarithmic conormal incidence of $C$ is locally finite over the logarithmic plane and admits a compactification for which only finitely many asymptotic directions occur. Assume also that the logarithmic conormal direction map is generically finite. Then there exists a finite partition
$
S^1=U_1\cup\cdots\cup U_N
$
and integers
$
\ell_1,\ldots,\ell_N
$
such that
$
\mathbb R\deg_m(X)=\ell_j
$
for every
$
m\in U_j.
$
Consequently,
$$
\mathbb R\deg(X)=\max_{1\leq j\leq N}\ell_j.
$$
The partition may be chosen so that the open pieces are the connected components of the complement of the exceptional direction set, while the exceptional directions themselves are included as lower-dimensional strata.
\end{theorem}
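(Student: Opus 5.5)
I would reduce the statement to a semialgebraic-type stratification argument for the function $m\mapsto\mathbb R\deg_m(X)$ on $S^1$. The model is the identity $\CA_C=\Log(\mathcal I_{\log}^{\R}(C))$ from the preceding proposition. The contour is the image of the real analytic curve $\mathcal I_{\log}^{\R}(C)$, which is cut out by $f=0$ and $\Ima(A_f\overline{B_f})=0$. Under the hypotheses (local finiteness over the logarithmic plane, a compactification with finitely many asymptotic directions, and generic finiteness of $\gamma_{\log}$), $X$ is a finite union of real analytic arcs. Each arc is either bounded, or its ends go off to infinity along one of finitely many asymptotic directions. The first step is to make this precise. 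I would decompose $X$ into finitely many open smooth arcs $X_1,\dots,X_K$ plus a finite set $V$ of special points: singular points, cusps, and images of ramification points of $\gamma_{\log}$, whose number is bounded by $\deg\Ram(\gamma_{\log})$ via Riemann--Hurwitz as in the correction-term section. On each arc, the oriented tangent direction gives a Gauss-type map $\nu_k:X_k\to\RP^1$.

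Next I would define the exceptional set $\mathcal E\subset S^1$ as the union of three pieces:
\begin{enumerate}
\item the directions $m$ for which $L_{\gamma,m}$ is tangent to some arc at a point, i.e. the critical values of $\nu_k$ (inflection directions);
\item the directions of segments joining pairs of points of $V$;
\item the finitely many asymptotic directions and their normals.
\end{enumerate}
Finiteness of the first piece is the key point. I would argue it from generic finiteness of $\gamma_{\log}$ together with the fact that the tangent direction of the contour at $\Log(p)$ is the real line annihilated by $\gamma_{\log}(p)\in\RP^1$ (Mikhalkin's description recalled earlier). Consequently $\nu_k$ is essentially $\gamma_{\log}$ restricted to the real incidence. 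Its critical points are therefore among the ramification points, which are finitely many.

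For $m$ in a connected component $U_j$ of $S^1\setminus\mathcal E$, I would show that $\mathbb R\deg_m(X)$ is constant. Write $h_m(\gamma)=\#(L_{\gamma,m}\cap X)$. For fixed $m$, this is a step function of $\gamma$ that jumps only at the values $\langle m,x\rangle$ for $x\in V$ and at tangency points. The supremum over generic $\gamma$ is the maximum of $h_m$ over finitely many intervals. As $m$ moves inside $U_j$, no new tangencies are created, no two special values $\langle m,x\rangle,\langle m,y\rangle$ cross (by the second piece of $\mathcal E$), and no arc changes whether it is asymptotically met (by the third piece). Hence the combinatorial order type of the intervals and the counts on them persist, by an isotopy/continuity argument, so $\mathbb R\deg_m(X)=\ell_j$ on $U_j$. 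Each exceptional direction is then added as its own zero-dimensional stratum with its own value. This gives the finite partition, and $\mathbb R\deg(X)=\sup_m\mathbb R\deg_m(X)$ becomes a maximum over finitely many integers.

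The main obstacle is the behaviour at infinity: unbounded contour arcs near tentacles, where local finiteness alone does not prevent the intersection count from changing as $m$ varies. This is exactly why the hypothesis of finitely many asymptotic directions is needed. My first attempt would be to compactify in the radial direction (adding a circle at infinity) and treat the asymptotic directions as extra points of $V$ at infinity. Then the same order-type argument applies on the compactified disc. If needed, I would also check that for $m\in U_j$ the relevant $\gamma$ can be taken in a compact range, so that the supremum is attained.
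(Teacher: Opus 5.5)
\textbf{Overall route.} Your strategy matches the paper's. The paper considers the incidence map $\Phi(p,[m])=([m],\langle m,\Log(p)\rangle)$ on the real conormal incidence, removes a set $\mathcal E_{\log}$ of tangent-exceptional, asymptotic and Gauss-branch directions, asserts local topological triviality over the complement, and adds the removed directions back as point strata. Your identification of inflection directions with Gauss branch directions is correct and makes that part concrete: the tangent line of $X$ at $\Log(p)$ has normal $\gamma_{\log}(p)$, and $\gamma_{\log}$ restricted to $\gamma_{\log}^{-1}(\RP^1)$ is a local diffeomorphism away from ramification.

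\textbf{The gap: the order-type step.} The jump values of $h_m$ are the values $\langle m,x\rangle$ for $x\in V$ \emph{and} the tangency values. Your $\mathcal E$ only prevents collisions among the former. Inside a component of your $S^1\setminus\mathcal E$, two tangency values can cross, or a tangency value can cross a cusp value. The order type then changes and $\sup_\gamma h_m$ can jump. Two examples show this.
\begin{enumerate}
\item Suppose $X$ contains two disjoint convex ovals, e.g.\ the boundaries of two holes of the amoeba of a Harnack curve with two interior lattice points. No line parallel to a strictly separating line meets both ovals, while the line through a point of each does. The switch happens exactly at the directions of the common tangents, i.e.\ bitangent lines of $X$, and none of your three events need occur there.
\item Take an ordinary cusp, modelled locally by $t\mapsto(t^2,t^3)$ with test lines $y-sx=c$. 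The equation $t^3-st^2=c$ has three local roots for $c$ in an interval of length $4|s|^3/27$, between the cusp value $0$ and the tangency value at $t=2s/3$. This interval collapses exactly at the cuspidal tangent direction $s=0$, where the local maximum drops from $3$ to $1$. Yet that direction is neither a critical value of $\nu_k$ on the open arcs nor the direction of a segment joining two points of $V$.
\end{enumerate}

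\textbf{How to repair it.} Add to $\mathcal E$ every direction at which two jump values coincide: bitangent directions, directions of lines tangent to $X$ through a point of $V$, and tangent directions of $X$ at points of $V$. Then prove these are finitely many; your own Gauss parametrization does this.
\begin{enumerate}
\item For $[n]$ off the branch values, the tangency points with normal $[n]$ are images of the at most $\deg\gamma_{\log}$ points $p_i([n])\in\gamma_{\log}^{-1}([n])$. These move analytically, so the tangency values $h_i([n])=\langle n,\Log(p_i([n]))\rangle$ are analytic.
\item The new exceptional directions are zeros of $h_i-h_j$ and of $h_i-\langle n,x\rangle$ with $x\in V$. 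Such zeros are isolated unless the difference vanishes identically.
\item $h_i\equiv h_j$ forces $\Log(p_i)\equiv\Log(p_j)$, since an arc is recovered from its support function by $x=h\,n+h'\,n^\perp$; so it is not a genuine bitangent.
\item Your compactification hypothesis then gives global finiteness.
\end{enumerate}
Alternatively, everything here is definable in the o-minimal structure $\R_{\exp}$. So $m\mapsto\mathbb R\deg_m(X)$ is a bounded definable integer-valued function, hence constant on finitely many points and open arcs.

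\textbf{Comparison with the paper.} The paper's unspecified \emph{tangent-exceptional directions} must contain exactly these collision directions for its triviality claim to give constancy of $\sup_\gamma$. Your repaired version is therefore a more explicit form of the same proof.

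\textbf{Minor point.} Cusps and crossings of $X$ are generally not images of ramification points of $\gamma_{\log}$. In local logarithmic coordinates $v=\phi(u)$, parametrize the critical curve by $t=\phi'\in\R$. The contour then has velocity $\Rea(1/\phi'')\,(1,t)$, which vanishes where $\phi''$ is purely imaginary, not where $\phi''=0$. So Riemann--Hurwitz bounds only the ramification part of $V$; finiteness of the rest must come from your local-finiteness and compactification hypotheses.
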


\begin{proof}
The logarithmic conormal cycle gives an intrinsic incidence map
$$
\Phi:\Con_{\log}^{\R}(C)\longrightarrow \mathbb P^1(\R)\times\R,
\qquad
\Phi(p,[m])=\left([m],\langle m,\Log(p)\rangle\right).
$$
For fixed $([m],\gamma)$, the fiber of $\Phi$ parametrizes the real logarithmic conormal points whose logarithmic images lie on the affine line $L_{\gamma,m}$. Thus the finite cardinality of this fiber controls
$
\#(L_{\gamma,m}\cap X).
$
The fiber cardinality can change only when the incidence fails to be locally topologically trivial. This happens at tangent directions, asymptotic directions, or directions where the logarithmic Gauss map ramifies.

Let
$
\mathcal E_{\log}\subset S^1
$
be the union of the tangent-exceptional directions, the asymptotic-exceptional directions, and the logarithmic Gauss branch directions. By the hypotheses, this exceptional set gives a finite semialgebraic stratification of the direction circle. On each connected component of
$
S^1\setminus\mathcal E_{\log},
$
the incidence map is locally a finite topologically trivial family over the relevant open part of the parameter space. Hence the number of intersections with a generic affine line varies locally constantly as $m$ varies in that component. Therefore the directional degree
$
m\longmapsto \mathbb R\deg_m(X)
$
is constant on each such component.
The exceptional directions are added as separate strata. Since there are finitely many strata, this gives a finite partition
$
S^1=U_1\cup\cdots\cup U_N
$
and integers $\ell_j$ such that
$
\mathbb R\deg_m(X)=\ell_j
$
on $U_j$. Taking the supremum over all $m\in S^1$ gives
$
\mathbb R\deg(X)=\max_{1\leq j\leq N}\ell_j,
$
because only finitely many values occur.
\end{proof}

\begin{corollary}[Rational directions give a lower bound]\label{cor:rational-lower-bound-revised}
Under the hypotheses of Theorem~\ref{thm:finite-partition-revised}, one has
$
\sup_{m\in S^1\cap\mathbb Q^2}\mathbb R\deg_m(X)
=
\max\{\ell_j:U_j\cap(S^1\cap\mathbb Q^2)\neq\varnothing\}.
$
In particular,
$$
\sup_{m\in S^1\cap\mathbb Q^2}\mathbb R\deg_m(X)
\leq
\mathbb R\deg(X).
$$
Equality holds if at least one stratum $U_j$ with
$
\ell_j=\max_{1\leq k\leq N}\ell_k
$
meets $S^1\cap\mathbb Q^2$.
\end{corollary}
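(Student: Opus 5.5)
The plan is to deduce the corollary directly from the finite partition in Theorem~\ref{thm:finite-partition-revised}; it is purely set-theoretic. Write $S^1_{\mathbb Q}=S^1\cap\mathbb Q^2$. Since $S^1=U_1\cup\cdots\cup U_N$, every rational direction $m$ lies in some stratum $U_j$. By the theorem, $\mathbb R\deg_m(X)=\ell_j$ there. Hence the set of values $\{\mathbb R\deg_m(X): m\in S^1_{\mathbb Q}\}$ is exactly $\{\ell_j: U_j\cap S^1_{\mathbb Q}\neq\varnothing\}$. Each $\ell_j$ in this set is attained at some rational $m\in U_j$. Each rational value $\mathbb R\deg_m(X)$ equals the $\ell_j$ of the stratum containing $m$. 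This set is finite and nonempty, since $S^1_{\mathbb Q}$ is nonempty (e.g.\ $(1,0)$). So its supremum is a maximum, which gives the displayed identity.

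The inequality then follows in either of two ways. One is the trivial inclusion $S^1_{\mathbb Q}\subset S^1$ already noted before the subsection. The other is to compare the maximum over a subfamily of indices $j$ with the maximum over all $j$, which by the theorem equals $\mathbb R\deg(X)$.

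For the equality case, suppose a stratum $U_{j_0}$ with $\ell_{j_0}=\max_k\ell_k$ contains a rational direction $m_0$. Then $\mathbb R\deg_{m_0}(X)=\ell_{j_0}=\mathbb R\deg(X)$. So the rational supremum is at least $\mathbb R\deg(X)$, and together with the inequality this gives equality.

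The only point needing care is that the partition is indexed by directions in $S^1$. The rational directions $S^1\cap\mathbb Q^2$ must therefore be read as unit vectors with rational coordinates, or equivalently the rays of primitive integral vectors. These rays are dense in $S^1$, since rational points on the circle are dense via stereographic parametrization. I do not expect a real obstacle here. As a remark, I would note that every open stratum (a connected component of $S^1\setminus\mathcal E_{\log}$) has nonempty interior and therefore meets $S^1_{\mathbb Q}$. Consequently equality can fail only if the maximum is attained solely on exceptional directions that are irrational.
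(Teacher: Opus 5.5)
Your argument is correct and is essentially the paper's proof. The rational supremum is the maximum of the $\ell_j$ over strata meeting $S^1\cap\mathbb Q^2$, this is at most the maximum over all strata, and the two agree when a maximal stratum contains a rational direction; your explicit nonemptiness check via $(1,0)$ is a small point the paper leaves implicit. One side remark is inaccurate, though your proof never uses it: unit vectors with rational coordinates are \emph{not} the same as rays of primitive integral vectors. For example, the ray of $(1,1)$ contains no point of $S^1\cap\mathbb Q^2$, since only primitive $(a,b)$ with $a^2+b^2$ a perfect square give rational unit vectors; both sets are nevertheless dense in $S^1$.
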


\begin{proof}
By Theorem~\ref{thm:finite-partition-revised}, the value of $\mathbb R\deg_m(X)$ is equal to $\ell_j$ on the stratum $U_j$. When the domain is restricted from all real directions to rational unit directions, only those strata which meet
$
S^1\cap\mathbb Q^2
$
can contribute. Hence
$\di
\sup_{m\in S^1\cap\mathbb Q^2}\mathbb R\deg_m(X)
=
\sup\{\ell_j:U_j\cap(S^1\cap\mathbb Q^2)\neq\varnothing\}.
$
Since the set of values is finite, this supremum is a maximum:
$\di
\sup_{m\in S^1\cap\mathbb Q^2}\mathbb R\deg_m(X)
=
\max\{\ell_j:U_j\cap(S^1\cap\mathbb Q^2)\neq\varnothing\}.
$

Since this maximum is taken over a subcollection of the indices, it is bounded above by
$$
\max_{1\leq j\leq N}\ell_j=\mathbb R\deg(X).
$$
If a stratum carrying the full maximum meets $S^1\cap\mathbb Q^2$, then the subcollection already contains an index $j$ for which $\ell_j=\mathbb R\deg(X)$, and equality follows.
\end{proof}

\begin{corollary}[Equality under an openness or stability hypothesis]\label{cor:rational-equality-stability}
Assume the hypotheses of Theorem~\ref{thm:finite-partition-revised}. Suppose, in addition, that the maximal value of
$
m\longmapsto\mathbb R\deg_m(X)
$
is attained on a nonempty open subset of $S^1$. Equivalently, suppose there is an open stratum $U_j$ such that
$
\ell_j=\mathbb R\deg(X).
$
Then
$\di
\mathbb R\deg(X)=
\sup_{m\in S^1\cap\mathbb Q^2}\mathbb R\deg_m(X).
$
\end{corollary}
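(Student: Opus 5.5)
The plan is to reduce the statement directly to Theorem~\ref{thm:finite-partition-revised} and Corollary~\ref{cor:rational-lower-bound-revised}, using only the density of rational unit directions in $S^1$.

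First, by Theorem~\ref{thm:finite-partition-revised} we have the finite partition $S^1=U_1\cup\cdots\cup U_N$. The function $m\mapsto\mathbb R\deg_m(X)$ is constant equal to $\ell_j$ on $U_j$, and $\mathbb R\deg(X)=\max_j\ell_j$. By hypothesis there is an index $j_0$ such that $U_{j_0}$ contains a nonempty open subset $V\subset S^1$ and $\ell_{j_0}=\mathbb R\deg(X)$. (In the formulation ``the maximum is attained on a nonempty open subset'', we take $V$ to be that open set. It then lies in the union of strata with maximal value. Since the exceptional strata are lower-dimensional, hence finite, $V$ minus these finitely many points is still nonempty and open. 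So it meets some open stratum $U_{j_0}$ carrying the maximum. This reconciles the two equivalent phrasings.)

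The key step is to show that $V\cap(S^1\cap\mathbb Q^2)\neq\varnothing$, i.e.\ that rational points are dense in $S^1$. I would use the rational parametrization $t\mapsto\bigl(\frac{1-t^2}{1+t^2},\frac{2t}{1+t^2}\bigr)$. It is a homeomorphism from $\R$ onto $S^1\setminus\{(-1,0)\}$ and maps $\mathbb Q$ into $S^1\cap\mathbb Q^2$. Since $\mathbb Q$ is dense in $\R$, its image is dense in $S^1$, so every nonempty open subset of $S^1$ contains a rational unit direction.

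Hence $U_{j_0}\cap(S^1\cap\mathbb Q^2)\neq\varnothing$. The equality clause of Corollary~\ref{cor:rational-lower-bound-revised} then gives $\sup_{m\in S^1\cap\mathbb Q^2}\mathbb R\deg_m(X)=\mathbb R\deg(X)$. Alternatively, pick a rational $m_0\in V$. Then $\mathbb R\deg_{m_0}(X)=\ell_{j_0}=\mathbb R\deg(X)$, which combined with the general inequality $\sup_{\mathbb Q}\le\mathbb R\deg(X)$ yields equality.

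I expect no real obstacle. The only delicate point is the bookkeeping between ``open subset where the maximum is attained'' and ``open stratum'', which is handled in the first step using the finiteness of the exceptional strata.
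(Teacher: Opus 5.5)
Your proof is correct and follows the paper's argument: density of $S^1\cap\mathbb Q^2$ in $S^1$ gives a rational direction in the open stratum carrying the maximum, and Corollary~\ref{cor:rational-lower-bound-revised} supplies the reverse inequality. Your additions are an explicit proof of density via the rational parametrization of the circle (the paper simply asserts it) and the reconciliation of the two phrasings of the hypothesis; the latter is harmless but not needed, since any rational point in the open set $V$ already gives the equality.
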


\begin{proof}
Since
$
S^1\cap\mathbb Q^2
$
is dense in $S^1$, every nonempty open subset of $S^1$ contains a rational unit direction. If the maximum is attained on an open stratum $U_j$, then
$
U_j\cap(S^1\cap\mathbb Q^2)\neq\varnothing.
$
For every $m\in U_j$, one has
$
\mathbb R\deg_m(X)=\ell_j=\mathbb R\deg(X).
$
Thus the rational-direction supremum is at least $\mathbb R\deg(X)$. The reverse inequality follows from Corollary~\ref{cor:rational-lower-bound-revised}. Hence equality holds.
\end{proof}

%%%%%%%%%%%%%%%%%%%%%%%%%%%%%%%%%%%%%%%%%%%%%%%%%%%%%%%%%%%%%%%%%%%%%%%%%%%%

%%%%%%%%%%%%%%%%%%%%%%%%%%%%%%%%%%%%%%%%%%%%%%%%%%%%%%%%%%%%%%%%%%%%%%%%%%%%

%%%%%%%%%%%%%%%%%%%%%%%%%%%%%%%%%%%%%%%%%%%%%%%%%%%%%%%%%%%%%%%%%%%%%%%%%%%%

%%%%%%%%%%%%%%%%%%%%%%%%%%%%%%%%%%%%%%%%%%%%%%%%%%%%%%%%%%%%%%%%%%%%%%%%%%%%
 
%%%%%%%%%%%%%%%%%%%%%%%%%%%%%%%%%%%%%%%%%%%%%%%%%%%%%%%%%%%%%%%%%%%%%%%%%%%%

\section{Explicit Conormal Bounds and Certified Examples}
In this section we give some explicit examples and compare our bound with the universal bound of Lang, Shapiro and Shustin, and give  an explicit conormal mixed-volume computations. 
 The bound must include the logarithmic criticality condition. A character-slice mixed volume alone counts intersections of $V$ with logarithmic affine slices, but it does not count the contour. The contour is governed by the real logarithmic conormal incidence, and therefore the relevant upper bound is a conormal bound.

For a plane curve $C=\{f=0\}\subset(\C^\ast)^2$.  Let $\Delta=\Newt(f)$. If $C$ is nondegenerate with respect to $\Delta$, then
$
\deg(\overline{\gamma}_{\log})=2\Area(\Delta)
$
and
$
g(\overline C)=\#(\Int(\Delta)\cap\Z^2).
$
By Riemann--Hurwitz,
$$
\deg\Ram(\overline{\gamma}_{\log})
=
2\deg(\overline{\gamma}_{\log})+2g(\overline C)-2
=
4\Area(\Delta)+2\#(\Int(\Delta)\cap\Z^2)-2.
$$
A safe real conormal bound is
$
\ConB
=
8\Area(\Delta)+2\#(\Int(\Delta)\cap\Z^2)-2+b_\infty(C,m),
$

where $b_\infty(C,m)$ is the visible boundary contribution. Using
$
b_\infty(C,m)\le \#(\partial\Delta\cap\Z^2)
$
gives
$$
\ConB
\le
8\Area(\Delta)+2\#(\Int(\Delta)\cap\Z^2)-2+\#(\partial\Delta\cap\Z^2).
$$

%%%%%%%%%%%%%%%%%%%%%%%%%%%%%%%%%%%%%%%%%%%%%%%%%%%%%%%%%%%%%%%%%%%%%%%%

Let $m$ be the direction $m=(1,1)$. For a plane curve $C\subset(\C^\ast)^2$ defined by a polynomial of total degree $d$ and Newton polygon $\Delta$, Proposition~7 of \cite{LangShapiroShustin21} gives
$
\Rdeg(\mathcal C\mathcal A_C)
\leq
4d^3(4d-2)
+
\#(\partial\Delta\cap\Z^2)
-
\operatorname{Area}_{\Z}(\Delta),
$
where $\operatorname{Area}_{\Z}(\Delta)=2\Area(\Delta)$ is the normalized lattice area. Thus, for $n=2$, the Lang--Shapiro--Shustin value depends on the total degree as well as on the boundary lattice-point count and the normalized area of the actual Newton polygon.

For a hypersurface $H\subset(\C^\ast)^n$ of total degree $d$, Proposition~6  of \cite{LangShapiroShustin21}  gives
$$
\Rdeg(\mathcal C\mathcal A_H)
\leq
2^{\,2n+(n-1)(n-2)/2}
d^{\,n+1}
\left(4dn+2(n-1)^2-1\right)^{n-1}.
$$
For $n=3$, this specializes to
$
\LSS(3,d)
=
2^7d^4(12d+7)^2
=
128d^4(12d+7)^2.
$

We first consider the generic affine line
$
C=\{1+z+w=0\}\subset(\C^\ast)^2.
$
Its total degree is $d=1$, its Newton polygon is
$
\Delta_2=\operatorname{Conv}\{(0,0),(1,0),(0,1)\},
$
its boundary contains three lattice points, and its normalized area equals one. Proposition~7 of \cite{LangShapiroShustin21} therefore gives
$$
\LSS
=
4(1)^3(4-2)+3-1
=
8+2
=
10.
$$
The conormal value is
$
8\Area(\Delta_2)-2+\#(\partial\Delta_2\cap\Z^2)=4-2+3=5,
$
the B\'ezout value is $8$, and the exact real degree is $4$. Hence the corrected first row is
$
(4,5,8,10).
$

For the curve
$
C=\{w-z^2-1=0\},
$
the support is
$
\{(0,0),(2,0),(0,1)\}.
$
Consequently,
$$
\Newt(f)=\operatorname{Conv}\{(0,0),(2,0),(0,1)\},
$$
which is not equal to $2\Delta_2$. The total degree is $d=2$. The polygon has Euclidean area $1$, normalized area $2$, no interior lattice points, and four boundary lattice points. Therefore Proposition~7 gives
$$
\LSS
=
4(2)^3(8-2)+4-2
=
32\cdot6+2
=
194.
$$
The exact sparse conormal estimate is
$
\ConB
=
8\cdot1+2\cdot0-2+4
=
10.
$
Since our purpose is to exploit sparse Newton data, the row should use $\ConB=10$. The B\'ezout value  is
$
8d^2=8\cdot4=32.
$
The certified diagonal degree is $4$. Thus the   row is
$
(4,10,32,194).
$

The resulting table is as follows ($m=(1,1)$).
\begin{center}
\renewcommand{\arraystretch}{1.23}
\setlength{\tabcolsep}{2.2pt}
\footnotesize
\resizebox{\textwidth}{!}{%
\begin{tabular}{|p{3.0cm}|p{3.8cm}|c|c|c|c|}
\hline
\textbf{Example}
&
\textbf{Newton data and test}
&
\textbf{Certified degree in the stated test}
&
$\boldsymbol{\ConB}$
&
$\boldsymbol{\Bez}$
&
$\boldsymbol{\LSS}$
\\
\hline
Generic affine line
&
$\Delta=\Delta_2$, diagonal
&
$4$
&
$5$
&
$8$
&
$10$
\\
\hline
Plane curve $w-z^2-1=0$
&
$\Delta=\operatorname{Conv}\{(0,0),(2,0),(0,1)\}$, diagonal
&
$4$
&
$10$
&
$32$
&
$194$
\\
\hline
Plane simplex
&
$\Delta=4\Delta_2$, diagonal
&
not certified
&
$80$
&
$128$
&
$3580$
\\
\hline
Rectangle
&
$[0,3]\times[0,2]$, diagonal
&
not certified
&
$60$
&
$200$
&
$8998$
\\
\hline
Rectangle
&
$[0,10]\times[0,1]$, diagonal
&
not certified
&
$100$
&
$968$
&
$223610$
\\
\hline
Box hypersurface in $(\C^\ast)^3$
&
$[0,5]\times[0,5]\times[0,1]$
&
not certified
&
$40^{\ast}$
&
$484$
&
$36208481408$
\\
\hline
Product hypersurface in $(\C^\ast)^3$
&
$[0,10]\times2\Delta_2$
&
not certified
&
$16^{\ast}$
&
$576$
&
$60518596608$
\\
\hline
Zonotope hypersurface in $(\C^\ast)^3$
&
$Z(e_1,e_2,(1,1,1))$
&
not certified
&
$8^{\ast}$
&
$100$
&
$359120000$
\\
\hline
\end{tabular}%
}
\end{center}
 
%%%%%%%%%%%%%%%%%%%%%%%%%%%%%%%%%%%%%%%%%%%%%%%%%%%%%%%%%%%%%%%%%%%%%%%%%%%%
%%%%%%%%%%%%%%%%%%%%%%%%%%%%%%%%%%%%%%%%%%%%%%%%%%%%%%%%%%%%%%%%%%%%%%%%%%%%% 

\subsection*{Certified plane examples}

For the generic affine line
$
C=\{1+z+w=0\}\subset(\C^\ast)^2,
$
the real logarithmic conormal condition is
$
\operatorname{Im}(z\overline w)=0.
$
Since $w=-1-z$, this is equivalent to $\operatorname{Im}z=0$. Thus
$
z=t,\, w=-1-t,\, t\in\R,\, t\neq0,-1.
$
The contour is
$
\CA_C=
\{(\log|t|,\log|1+t|)\mid t\in\R,\ t\neq0,-1\}.
$
It has three real branches:
$
(\log t,\log(1+t)),\, t>0,
$
$
(\log t,\log(1-t)),\, 0<t<1,
$
and
$
(\log t,\log(t-1)),\, t>1.
$
A direct one-variable analysis shows that the global real contour degree is
$
\mathbb R\deg(\CA_C)=4.
$

For
$
C=\{w-z^2-1=0\},
$
the conormal parametrization gives the real branches
$
z=t,\, w=t^2+1,\, t>0,
$
$
z=it,\, w=1-t^2,\, 0<t<1,
$
and
$
z=it,\, w=1-t^2,\, t>1,
$
after passing to absolute values in the logarithmic image. In the diagonal direction, the intersection equations are
$
t(t^2+1)=e^\gamma,
$\,
$
t(1-t^2)=e^\gamma,
$
and
$
t(t^2-1)=e^\gamma.
$
The first and third functions are strictly increasing on their relevant domains. The middle function has derivative $1-3t^2$, hence it has exactly one critical point in $(0,1)$ and gives two roots for generic small positive $e^\gamma$. Therefore
$
\mathbb R\deg_{(1,1)}(\CA_C)=4.
$

%%%%%%%%%%%%%%%%%%%%%%%%%%%%%%%%%%%%%%%%%%%%%%%%%%%%%%%%%%%%%%%%%%%%%%%


\begin{thebibliography}{99}



\bibitem{Bergman71}
G.~M.~Bergman,
\emph{The logarithmic limit-set of an algebraic variety},
Transactions of the American Mathematical Society
\textbf{157} (1971), 459--469.



\bibitem{Bernstein75}
D.~N. Bernstein,
The number of roots of a system of equations,
\emph{Functional Analysis and Its Applications} \textbf{9} (1975), no.~3, 183--185.

\bibitem{BieriGroves84}
R.~Bieri and J.~R.~J.~Groves,
\emph{The geometry of the set of characters induced by valuations},
Journal f\"ur die Reine und Angewandte Mathematik
\textbf{347} (1984), 168--195.



\bibitem{ForsbergPassareTsikh00}
M.~Forsberg, M.~Passare, and A.~Tsikh,
Laurent determinants and arrangements of hyperplane amoebas,
\emph{Advances in Mathematics} \textbf{151} (2000), no.~1, 45--70.

\bibitem{Fulton93}
W.~Fulton,
\emph{Introduction to Toric Varieties},
Annals of Mathematics Studies, Vol.~131,
Princeton University Press, Princeton, NJ, 1993.

\bibitem{Fulton98}
W.~Fulton,
\emph{Intersection Theory},
2nd ed.,
Springer, Berlin, 1998.

\bibitem{GKZ94}
I.~M. Gelfand, M.~M. Kapranov, and A.~V. Zelevinsky,
\emph{Discriminants, Resultants, and Multidimensional Determinants},
Birkh\"auser, Boston, 1994.


\bibitem{GriffithsHarris78}
P.~Griffiths and J.~Harris,
\emph{Principles of Algebraic Geometry},
Wiley-Interscience, New York, 1978.

\bibitem{Khovanskii91}
A.~G. Khovanskii,
\emph{Fewnomials},
Translations of Mathematical Monographs, Vol.~88,
American Mathematical Society, Providence, RI, 1991.

\bibitem{LangShapiroShustin21}
L.~Lang, B.~Shapiro, and E.~Shustin,
On the number of connected components of the complement of real amoebas,
\emph{Proceedings of the London Mathematical Society} \textbf{122} (2021), no.~3, 517--544.

\bibitem{MaclaganSturmfels15}
D.~Maclagan and B.~Sturmfels,
\emph{Introduction to Tropical Geometry},
Graduate Studies in Mathematics, Vol.~161,
American Mathematical Society, Providence, RI, 2015.

\bibitem{Mikhalkin00}
G.~Mikhalkin,
Real algebraic curves, the moment map and amoebas,
\emph{Annals of Mathematics} \textbf{151} (2000), no.~1, 309--326.

\bibitem{Mikhalkin04}
G.~Mikhalkin,
Amoebas of algebraic varieties and tropical geometry,
in \emph{Different Faces of Geometry},
International Mathematical Series, Vol.~3,
Kluwer Academic/Plenum Publishers, New York, 2004, pp.~257--300.

\bibitem{NissePassare17}
M.~Nisse and M.~Passare,
Amoebas and Coamoebas of Linear Spaces,
in M.~Andersson, J.~Boman, C.~Kiselman, P.~Kurasov, and R.~Sigurdsson (eds.),
\emph{Analysis Meets Geometry},
Trends in Mathematics,
Birkh\"auser, Cham, 2017, pp.~63--80.

\bibitem{NisseSottile13}
M.~Nisse and F.~Sottile,
The phase limit set of an algebraic variety,
\emph{Algebra \& Number Theory} \textbf{7} (2013), no.~2, 339--352.

\bibitem{NisseSottile22}
M.~Nisse and F.~Sottile,
Describing amoebas,
\emph{Pacific Journal of Mathematics} \textbf{317} (2022), no.~1, 187--205.

\bibitem{PassareRullgard04}
M.~Passare and H.~Rullg{\aa}rd,
Amoebas, Monge--Amp\`ere measures, and triangulations of the Newton polytope,
\emph{Duke Mathematical Journal} \textbf{121} (2004), no.~3, 481--507.

\bibitem{Tevelev07}
J.~Tevelev,
Compactifications of subvarieties of tori,
\emph{American Journal of Mathematics} \textbf{129} (2007), no.~4, 1087--1104.

\end{thebibliography}
\end{document}